\newtheorem{theorem}{Theorem}
\newtheorem{definition}[theorem]{Definition}
\newtheorem{lemma}[theorem]{Lemma}
\newtheorem{proposition}[theorem]{Proposition}
\newtheorem{remark}[theorem]{Remark}
\newcommand{\addresseshere}{%
	\enddoc@text\let\enddoc@text\relax
}
\newcommand{\pr}{\partial_{r}}
\newcommand{\ric}{\mathrm{Ric}}
\newcommand{\dive}{\mathrm{div}}
\newcommand{\s}{\sigma}
\begin{document}
\title{Serrin's type problems in warped product manifolds}
\author{Alberto Farina and Alberto Roncoroni}
\thanks{}
\address{Alberto Farina, LAMFA, CNRS UMR 7352, Universit\'e de Picardie Jules Verne, 33 rue Saint-Leu, 80039 Amiens Cedex 1, France}
\email{alberto.farina@u-picardie.fr}
\address{Alberto Roncoroni, Dipartimento di Matematica e Informatica ``Ulisse Dini'', Universit\`a degli Studi di Firenze, Viale Morgagni 67/A, 50134 Firenze, Italy}
\email{alberto.roncoroni@unifi.it}

\date{\today}
\subjclass[2010]{35R01, 35N25, 53C24 (primary); 35B50, 58J05, 58J32 (secondary)} 
\keywords{Overdetermined PDE, warped product, $P$-function, rigidity.}

\begin{abstract}
In this paper we consider Serrin's overdetermined problems in warped product manifolds and we prove Serrin's type rigidity results by using the $P$-function approach introduced by Weinberger. 
\end{abstract}

\maketitle

\section{Introduction}


In \cite{Serrin}, J. Serrin proved the following celebrated result: \textit{if there exists a positive solution $u\in C^2(\bar{\Omega})$ to the following semilinear overdetermined problem 
\begin{equation}\label{Serrin1}
\begin{cases}
\Delta u +f(u)=0 &\mbox{in } \Omega, \\ 
u= 0 &\mbox{on } \partial\Omega, \\
\partial_\nu u=c &\mbox{on } \partial\Omega\, ,
\end{cases}
\end{equation}
where $\Omega\subset\mathbb{R}^n$ is a bounded domain with boundary of class $C^2$, $f \in C^1$
and $\nu$ denotes the unit normal to $\partial\Omega$, then $\Omega$ must be a ball and $u$ is radially symmetric.}
This result is known as the Serrin's symmetry (or rigidity) result. The technique introduced in \cite{Serrin} to prove this result is a refinement of the famous reflection principle due
to Alexandrov (see \cite{Alex}) and is the so-called moving planes method together with the maximum principle and a new version of the Hopf's boundary point Lemma (the Corner lemma, see \cite[Lemma 1]{Serrin}). We mention that the technique of Serrin applies to more generally uniformly elliptic operators (see \cite{Serrin}) and it has inspired the study of various properties and symmetry results for positive solutions of elliptic partial differential equations in bounded and unbounded
domains (see for instance the seminal paper \cite{GNN}).

In \cite{Weinberger}, H. Weinberger provided a simpler proof in the case $\Delta u=-1$ based on what are nowadays called P-function and using integral identities. We mention that also the approach of Weinberger inspired several works in the context of elliptic
partial differential equations (see e.g. \cite{CGS,FK,FV,FGK,GL,Payne,Sperb} and their references).

In literature there are generalizations of Serrin's result for domains in the so-called space forms, i.e. complete, simply connected Riemannian manifolds with constant sectional curvature. Thanks to the Killing-Hopf theorem (see \cite{Hopf,Killing}) it is well-known that space forms are isometric to the Euclidean space $\mathbb{R}^n$, to the hyperbolic space $\mathbb{H}^n$ or to the sphere $\mathbb{S}^n$. In particular in \cite{Kumaresan-Prajapat} and \cite{Molzon} the moving planes method is used to prove the analogue of Serrin's result for the problem \eqref{Serrin1}  for bounded domains in $\mathbb{H}^n$ and in $\mathbb{S}^n_+$ (we mention that in $\mathbb{S}^n$ the theorem is not true, see e.g. \cite{FMW} and \cite{Shklover}). Of particular interest for us are the work \cite{Ciraolo_Vezzoni} and \cite{Ron} where a $P$-function approach is used to prove the analogue of Serrin's theorem in space forms for the following equation 
\begin{equation}\label{Alexandrov}
\Delta u+nku=-1\, ,
\end{equation} 
where $k=0$ in $\mathbb{R}^n$, $k=1$ in $\mathbb{S}^n_+$ and $k=-1$ in $\mathbb{H}^n$ (see also \cite{Ciraolo_Roncoroni} where the same problem is considered for overdetermined problems in convex cones of a space form).
%
Inspired by these results, in this paper we prove the analogue of Serrin's theorem in a particular class of Riemannian manifolds: the warped product manifolds. The warped products (defined firstly in \cite{BN}, see also \cite{Libro_warped}) are the most fruitful generalization of the notion of Cartesian (or direct) product and of the notion of rotationally symmetric manifold (the one considered in \cite{Ciraolo_Vezzoni} and \cite{Ron}). We recall their definition

\begin{definition}\label{warped}
A Riemannian manifold $(M,g)$ of dimension $n\geq 2$ is a warped product manifold if
\begin{equation}
M=I\times N \quad \text{ and } \quad g=dr\otimes dr +\s^2(r)g_N\, ,
\end{equation}
where
\begin{itemize}
\item $I$ is an open interval;
\item $(N,g_N)$ is a smooth and connected $(n-1)$-dimensional Riemannian manifold without boundary;
\item $\s:I\rightarrow\mathbb{R}$ is a smooth function such that:
$$ \s >0 \quad on \quad I.
$$
\end{itemize}
\end{definition}
In particular $(M,g)$ is a smooth connected $n$-dimensional manifold without boundary (not necessarily complete). 

  
Our first result concerns warped product manifolds where the fiber manifold $(N,g_N)$ satisfies 
\begin{equation}\label{Ricci_N}
\ric_N  \geq (n-2)\rho g_N \quad  \text{ for some constant $\rho \in \mathbb{R}$} \, ,
\end{equation}
and the function $\sigma$ is given by 
\begin{equation}\label{warped_function}
\s(r)=\begin{cases}
c_1 e^{\sqrt{-k}r}+c_2 e^{-\sqrt{-k}r} &\mbox{if } k<0, \\ c_1+c_2 r &\mbox{if } k=0, \\ c_1 \cos(\sqrt{k}r)+c_2\sin(\sqrt{k}r) &\mbox{if } k>0,
\end{cases}
\end{equation}
where the constants $c_1$ and $c_2$ are chosen so that $\sigma>0$ on $I$, 
\begin{equation}\label{constants}
	\s'(r)\geq 0 \quad \text{ for all $r\in I$ and $\sigma'\not\equiv0$}
\end{equation}
and the warped product manifold $(M,g)$ satisfies 
\begin{equation}\label{Ricci_primo}
\ric_M \geq (n-1) k g. 
\end{equation}

The are a lot of manifolds satisfying those properties. Indeed, the Ricci tensor of a warped product manifold is given by the following expression 
\begin{equation}\label{Ricci_warped}
\ric_M= \ric_N - (\s(r) \s''(r) + (n-2) \s'(r)^2)g_N - (n-1) \frac{\s''(r)}{\s(r)} dr\otimes dr
\end{equation}
(see e.g. \cite{Brendle,Besse}) and so
\begin{equation}\label{Ricci_warped2}
\begin{aligned}
\ric_M  =& (\ric_N - (n-2)\rho g_N) -\left(\dfrac{\s''(r)}{\s(r)}-(n-2)\dfrac{\rho-\s'(r)^2}{\s(r)^2}\right)g + \\ &-(n-2)\left(\dfrac{\s''(r)}{\s(r)}+\dfrac{\rho-\s'(r)^2}{\s(r)^2}\right)dr\otimes dr \\
 \geq & -\left(\dfrac{\s''(r)}{\s(r)}-(n-2)\dfrac{\rho-\s'(r)^2}{\s(r)^2}\right)g -(n-2)\left(\dfrac{\s''(r)}{\s(r)}+\dfrac{\rho-\s'(r)^2}{\s(r)^2}\right)dr\otimes dr\,,
\end{aligned}
\end{equation}
where in that letter we have used \eqref{Ricci_N}.
Now, if the fiber manifold $(N,g_N)$ satisfies \eqref{Ricci_N} with $ \rho >0$, 
the open interval $I$ satisfies 
\begin{equation}\label{intervallo-warped}
\begin{cases}
I \subset (0,\infty) &\mbox{if } k\leq 0, \\ 
I \subset (0, \frac{\pi}{2\sqrt{k}}) &\mbox{if } k>0,
\end{cases}
\end{equation}
and the warping function is given by 
\begin{equation}\label{warped_function2}
\s(r)=\begin{cases}
\sqrt{\rho} \frac{\sinh(\sqrt{-k}r)}{\sqrt{-k}} &\mbox{if } k<0, \\ 
\sqrt{\rho} r &\mbox{if } k=0, \\ 
\sqrt{\rho} \frac{\sin(\sqrt{k}r)}{\sqrt{k}}&\mbox{if } k>0,
\end{cases}
\end{equation}
then, thanks to \eqref{Ricci_warped2}, the resulting warped product manifold $(M,g)$ satisfies \eqref{Ricci_primo}. 
In particular, if $N = \mathbb{S}^{n-1}$ is the unit sphere endowed with its canonical metric, we recover the case (of open sets) of the space forms with constant sectional curvature equal to $k$.

Other non-trivial examples of warped product manifolds satisfying our assumptions are obtained by choosing $N$ satisfying \eqref{Ricci_N} and $I$ and $ \sigma$ as follows:  
\begin{equation}\label{ex1}
I \subset \mathbb{R}\, , \quad \s(r)=e^{\sqrt{-k} r}\,\, , \text{$k <0$ and $ \rho = 0$}\, ,
\end{equation}
or
\begin{equation}\label{es2}
I \subset (0, \infty)\, , \quad  \s(r)=c_1 e^{\sqrt{-k}r}+c_2 e^{-\sqrt{-k}r}\, ,  \text{$k <0$, $c_1 \geq c_2 >0$ such that $\rho=4kc_1 c_2 (<0)$}\, . 
\end{equation}
Also observe that, when \eqref{warped_function2} or \eqref{ex1} or \eqref{es2} are in force and the fiber manifold $(N,g_N)$ is Einstein (i.e., it satisfies \eqref{Ricci_N} with the equality sign) then also the resulting warped product manifold $(M,g)$ is Einstein (i.e., it satisfies \eqref{Ricci_primo} with the equality sign).

\medskip

With these preliminaries, the first result of this paper is the following. 
\medskip

\begin{theorem}\label{teo_princ}
Let $(M,g)$ be a warped product manifold (not necessarily complete) such that \eqref{Ricci_N}, \eqref{warped_function}, \eqref{constants} and \eqref{Ricci_primo} hold true. Let $\Omega\subset M$ be a domain (i.e. open and connected set) with boundary of class $C^1$ such that $\overline{\Omega}$ is compact. Let $u\in C^3(\Omega)\cap C^2(\overline{\Omega})$ be a solution to 
\begin{equation}\label{pb_principale}
\begin{cases}
\Delta u + nku=-1 &\mbox{in } \Omega, 
\\ u>0 &\mbox{in } \Omega,
\\ u= 0 &\mbox{on } \partial\Omega\, , 
\end{cases}
\end{equation}
such that, for some constant $c$,
\begin{equation}\label{cond_sovra}
|\nabla u|=c \quad \text{on $\partial\Omega$.}
\end{equation}
Then $\Omega$ is a metric ball and $u$ is a radial function, i.e. $u$ depends only on the distance from the center of the ball.
\end{theorem}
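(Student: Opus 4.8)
The plan is to run Weinberger's $P$-function method, adapted to the warped geometry through the Bochner formula and a Pohozaev-type identity built on the concircular vector field that every warped product of the form \eqref{warped_function} carries.

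\textbf{Step 1 (the $P$-function).} I would set
$$ P := |\nabla u|^2 + \frac{2}{n}\,u + k\,u^2 . $$
Since $\Delta u = -1 - nku$ gives $\nabla(\Delta u) = -nk\,\nabla u$, Bochner's formula together with the Cauchy--Schwarz inequality $|\Hess u|^2 \ge (\Delta u)^2/n$ and the Ricci bound \eqref{Ricci_primo} yields, after collecting terms, the pointwise identity
$$ \Delta P = 2\Big|\Hess u - \tfrac{\Delta u}{n}\,g\Big|^2 + 2\big(\ric_M(\nabla u,\nabla u) - (n-1)k\,|\nabla u|^2\big) \ge 0 . $$
Hence $P$ is subharmonic, and $\Delta P$ vanishes at a point exactly when $\Hess u = \tfrac{\Delta u}{n}g$ there and the Ricci bound is saturated in the direction $\nabla u$.

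\textbf{Step 2 (maximum principle).} Because $u=0$ and $|\nabla u|=c$ on $\partial\Omega$, we have $P\equiv c^{2}$ on $\partial\Omega$; since $\overline\Omega$ is compact, the maximum principle gives $P\le c^{2}$ throughout $\Omega$.

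\textbf{Step 3 (forcing equality).} This is where I expect the main technical work. Integrating $P\le c^{2}$ against a nonnegative weight (for instance $1$, or $-\Delta u = 1+nku$ when this is positive) bounds $\int_\Omega P$ from above. To obtain the reverse bound I would use the field $X:=\sigma(r)\,\pr$: the form of \eqref{warped_function} forces $\sigma'' = -k\sigma$, so the function $v:=\int\sigma\,dr$ satisfies $\Hess v = \sigma' g$, and thus $X=\nabla v$ is a closed conformal (concircular) field with $\nabla_Y X = \sigma'\,Y$. This is precisely the analogue of the Euclidean position field, and the associated Rellich--Pohozaev identity, combined with the overdetermined datum \eqref{cond_sovra} and $\Delta u=-1-nku$, produces the reverse inequality. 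The two bounds coincide, so $\int_\Omega(c^{2}-P)(\text{weight})=0$, whence $P\equiv c^{2}$ and therefore $\Delta P\equiv 0$. By Step 1 this forces
$$ \Hess u = \tfrac{\Delta u}{n}\,g = -\,\tfrac{1+nku}{n}\,g \qquad \text{throughout } \Omega , $$
together with $\ric_M(\nabla u,\nabla u) = (n-1)k\,|\nabla u|^2$.

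\textbf{Step 4 (rigidity).} Finally I would convert $\Hess u = \phi\,g$, with $\phi=\tfrac{\Delta u}{n}$, into the geometric conclusion. Since $u>0$ in $\Omega$, $u=0$ on $\partial\Omega$, and $\overline\Omega$ is compact, $u$ attains an interior maximum at some critical point $p_{0}$. A function whose Hessian is a multiple of the metric is a special concircular function, so by a Tashiro/Obata-type classification the level sets of $u$ are totally umbilical geodesic spheres centred at $p_{0}$, the metric is rotationally symmetric about $p_{0}$, and $u$ depends only on $\mathrm{dist}(\cdot,p_{0})$; the saturation of the Ricci bound is consistent with, and confirms, this model structure. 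Consequently $\Omega=\{u>0\}$ is the metric ball centred at $p_{0}$ and $u$ is radial. The two places I expect genuine difficulty are the Pohozaev step (choosing the weight and controlling the boundary terms, where positivity of $-\Delta u$ and the sign conditions \eqref{constants} on $\sigma$ enter) and the concluding concircular-function rigidity in a manifold that is merely a warped product rather than a space form.
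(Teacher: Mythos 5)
Your Steps 1, 2 and 4 are exactly the paper's strategy: your pointwise identity for $\Delta P$ is correct and is precisely the content of Lemma \ref{pre1}, and the endgame (converting $\nabla^2 u=-(\tfrac1n+ku)g$ into ``$\Omega$ is a ball and $u$ is radial'') is the paper's Lemma \ref{Obata_lemma}. The genuine gap is in Step 3, which is the heart of the proof and which you leave as an assertion (``the two bounds coincide''). That matching is not automatic and hinges on the choice of weight, and neither of your candidates ($1$, or $-\Delta u=1+nku$) works. The Pohozaev identity generated by your concircular field $X=\sigma\partial_r$ (Lemma \ref{lemma1}, Proposition \ref{Pohozaev}) only produces integrals weighted by $\sigma'$ or by $(\sigma''\sigma^{n-1})'/\sigma^{n-1}$; it cannot produce the unweighted quantities $c^2|\Omega|$ or $\int_\Omega P$, so with weight $1$ the two sides live on different scales and cannot be compared. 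The correct weight is $\sigma'$ itself: multiplying $c^2-P>0$ by $\sigma'$ is legitimate exactly because of \eqref{constants} ($\sigma'\geq 0$, $\sigma'\not\equiv 0$ --- this, not positivity of $-\Delta u$, is where that hypothesis enters), and then substituting \eqref{pre2} together with the integration-by-parts identities
\[
\int_{\Omega}\sigma'|\nabla u|^2=\int_{\Omega}\sigma' u + nk\int_{\Omega}\sigma' u^2 -\int_{\Omega}\sigma'' u\,\partial_r u
\qquad\text{and}\qquad \eqref{eq7}
\]
collapses everything to
\[
\int_\Omega\left(k\sigma'+\frac{(\sigma''\sigma^{n-1})'}{n\sigma^{n-1}}\right)u^2<0\, ,
\]
which is a contradiction because the integrand vanishes identically: \eqref{warped_function} gives $\sigma''=-k\sigma$, hence $(\sigma''\sigma^{n-1})'=-k(\sigma^n)'=-kn\sigma'\sigma^{n-1}$. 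So the cancellation you hope for is real, but it is an exact algebraic identity tied to the specific weight $\sigma'$ and to $\sigma''=-k\sigma$; without exhibiting it, Step 3 is not a proof.

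A secondary issue is Step 4: the Tashiro/Obata classification you invoke assumes completeness, whereas here $(M,g)$ may be incomplete and the Hessian equation holds only on $\Omega$. The paper replaces it with a local argument (Lemma \ref{Obata_lemma}): one integrates the ODE $f''=-\tfrac1n-kf$ along minimizing geodesics issued from the interior maximum point, and the existence of such geodesics inside the ball requires the star-shapedness results of Appendix \ref{Appendix}. Note also that your conclusion ``the metric is rotationally symmetric about $p_0$'' is stronger than what Theorem \ref{teo_princ} claims or than what this local lemma yields; that stronger rigidity is obtained only in Theorem \ref{teo_princ_ter}, for model manifolds with the pole inside $\Omega$.
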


The second result of this paper is a generalization of Theorem \ref{teo_princ} in general warped product manifolds, i.e., for general warping functions $\sigma>0$,  where we assume a compatibility condition between the geometry of the warped product and the solution to the overdetermined problem.


\begin{theorem}\label{teo_princ_bis}
 Let $(M,g)$ be a warped product manifold (not necessarily complete) such that
 \begin{equation}\label{hp_ricci}
\ric_M\geq (n-1)k g\, , \quad \text{ for some $k\in\mathbb{R}$,}
 \end{equation}
 and 
\begin{equation}\label{hp_warped}
	\s'(r)\geq 0 \quad \text{ for all $r\in I$, $\sigma'\not\equiv0$.}
\end{equation} 
Let $\Omega\subset M$ be a domain 
with boundary of class $C^1$ such that $\overline{\Omega}$ is compact. Let $u\in C^3(\Omega)\cap C^2(\overline{\Omega})$ be a solution to \eqref{pb_principale}-\eqref{cond_sovra}. If $u$ satisfies the following compatibility condition 
	\begin{equation}\label{comp}
	\int_{\Omega}\left(k\sigma'+\dfrac{(\sigma''\sigma^{n-1})'}{n\sigma^{n-1}} \right)u^2\geq 0\, .
	\end{equation}
Then $\Omega$ is a metric ball and $u$ is a radial function, i.e. $u$ depends only on the distance from the center of the ball.
\end{theorem}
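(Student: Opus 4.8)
The plan is to adapt Weinberger's $P$-function technique, the genuinely new ingredient being the conformal vector field that every warped product carries. Throughout I would work with
\begin{equation*}
P = |\nabla u|^2 + \tfrac{2}{n}u + ku^2,
\end{equation*}
whose boundary value is pinned down by the overdetermination: since $u=0$ and $|\nabla u|=c$ on $\partial\Omega$, one has $P\equiv c^2$ there. The first step is to prove that $P$ is subharmonic. Feeding the equation $\Delta u=-1-nku$ (so $\nabla\Delta u=-nk\nabla u$) into Bochner's formula and splitting $|\nabla^2u|^2=|\nabla^2u-\tfrac{\Delta u}{n}g|^2+\tfrac{(\Delta u)^2}{n}$, a direct computation collapses to the clean identity
\begin{equation*}
\tfrac12\Delta P=\Big|\nabla^2u-\tfrac{\Delta u}{n}g\Big|^2+\Big(\ric(\nabla u,\nabla u)-(n-1)k|\nabla u|^2\Big).
\end{equation*}
Both terms are nonnegative, the second precisely by the hypothesis $\ric_M\geq(n-1)kg$, so $\Delta P\geq0$ and the maximum principle gives $P\leq c^2$ on $\overline\Omega$.

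The second step brings in the warped structure. If $\psi=\psi(r)$ solves $\psi'=\sigma$, then $\nabla^2\psi=\sigma'g$, so $X:=\nabla\psi=\sigma\,\partial_r$ is a conformal field with $\dive X=n\sigma'$; this is the warped-product substitute for the Euclidean position vector field and is exactly what makes a Pohozaev identity available. Integrating the divergence of $\langle X,\nabla u\rangle\nabla u-\tfrac12|\nabla u|^2X$ and using $u=0$, $\nabla u=-c\nu$ on $\partial\Omega$, I would obtain a Rellich--Pohozaev identity that expresses $c^2\int_\Omega\sigma'$ as a combination of the bulk integrals $\int_\Omega\sigma'u$, $\int_\Omega\sigma'u^2$ and $\int_\Omega\sigma'|\nabla u|^2$. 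In parallel, since $\sigma'\geq0$ by \eqref{hp_warped}, multiplying $P\leq c^2$ by $\sigma'$ and integrating gives $\int_\Omega\sigma'P\leq c^2\int_\Omega\sigma'$, with nonnegative slack $S:=c^2\int_\Omega\sigma'-\int_\Omega\sigma'P\geq0$.

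The heart of the proof is then a precise bookkeeping computation: substituting the Pohozaev identity into $S$, every $\sigma'$-weighted integral recombines, and after one integration by parts (licit because $u=0$ and $\partial_\nu(u^2)=0$ on $\partial\Omega$) together with the radial-Laplacian identity $\tfrac1n\Delta\sigma'=\tfrac{(\sigma''\sigma^{n-1})'}{n\sigma^{n-1}}$, I expect
\begin{equation*}
S=-(n-1)\int_\Omega\left(k\sigma'+\frac{(\sigma''\sigma^{n-1})'}{n\sigma^{n-1}}\right)u^2.
\end{equation*}
As $S\geq0$, the compatibility hypothesis \eqref{comp} forces both $S=0$ and the vanishing of the compatibility integral. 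From $S=\int_\Omega\sigma'(c^2-P)=0$ with $\sigma'\geq0$ and $c^2-P\geq0$ I conclude $P=c^2$ on the nonempty open set $\{\sigma'>0\}$ (recall $\sigma'\not\equiv0$), and the strong maximum principle then propagates $P\equiv c^2$ over the connected domain $\Omega$. Note that when $\sigma''=-k\sigma$, which is the situation of Theorem \ref{teo_princ}, the above integrand vanishes identically, so no compatibility hypothesis is needed there.

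Finally, $P\equiv c^2$ returns $\Delta P\equiv0$, hence both nonnegative terms in the Bochner identity vanish: $\nabla^2u=\tfrac{\Delta u}{n}g=-\tfrac{1+nku}{n}g$ and $\ric(\nabla u,\nabla u)=(n-1)k|\nabla u|^2$. A Hessian pointwise proportional to $g$ makes the regular level sets of $u$ totally umbilical with $|\nabla u|$ constant on each, which forces $u$ to be a function of the distance to a single point and $\Omega$ to be a metric ball. I expect the two delicate points to be, first, the combinatorial matching in the formula for $S$, namely getting every coefficient and sign right so that precisely the expression in \eqref{comp} emerges, and, second, turning the pointwise umbilicity $\nabla^2u\propto g$ into the genuine statement that $\Omega$ is a geodesic ball and $u$ is radial, a step of Riemannian geometry (analysis of the level-set foliation) rather than a further PDE estimate.
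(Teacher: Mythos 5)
Your proposal is correct and, at its core, is the paper's own strategy: the same $P$-function, the same Bochner computation giving $\Delta P\geq 0$ with equality forcing $\nabla^2u=-\left(\tfrac1n+ku\right)g$ and $\ric_M(\nabla u,\nabla u)=(n-1)k|\nabla u|^2$ (Lemma \ref{pre1}), a $\sigma'$-weighted comparison of $P$ with $c^2$ played against a Pohozaev identity, and an Obata-type rigidity step at the end. The genuine difference is how the Pohozaev identity is produced. The paper proves the commutator formula $\Delta(\sigma\pr u)=\sigma\pr\Delta u+2\sigma'\Delta u+(2-n)\sigma''\pr u$ by direct computation with the warped-product Laplacian (Lemma \ref{lemma1}) and then applies Green's second identity to the pair $(u,\sigma\pr u)$ (Proposition \ref{Pohozaev}); you instead integrate the divergence of the Rellich field $g(X,\nabla u)\nabla u-\tfrac12|\nabla u|^2X$ associated with the closed conformal field $X=\nabla\psi$, $\nabla^2\psi=\sigma'g$. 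The two routes are equivalent in content --- your anticipated slack identity $S=-(n-1)\int_\Omega\bigl(k\sigma'+\frac{(\sigma''\sigma^{n-1})'}{n\sigma^{n-1}}\bigr)u^2$ is exactly what the paper derives (in contradiction form) by substituting \eqref{16} and \eqref{17} into \eqref{15} --- but yours isolates the structural reason a warped product admits such an identity and would extend to any manifold carrying a closed conformal field, while the paper's computation is more pedestrian but fully self-contained; your identity formulation of $S$ is also slightly cleaner than the paper's strict-inequality dichotomy, though both arguments tacitly use that $\{\sigma'>0\}$ meets $\Omega$ (otherwise \eqref{15} is not strict and your open set is empty). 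The one place where your proposal is a sketch rather than a proof is the final step: in a possibly incomplete manifold, passing from $\nabla^2u=-\left(\tfrac1n+ku\right)g$ to ``$\Omega$ is a metric ball and $u$ is radial'' is precisely the content of Lemma \ref{Obata_lemma}, which the paper proves by solving the ODE $f''=-\tfrac1n-kf$ along minimizing geodesics issued from the maximum point of $u$, and which needs the Appendix (Lemma \ref{lemmaGeod1} and Proposition \ref{LemmaGeod2}) to guarantee that such geodesics exist and stay inside the relevant balls; your umbilical-level-set plan is a viable alternative but would have to be developed to a comparable level of rigor, as you yourself flag.
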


Observe that in Theorem \ref{teo_princ_bis} we do not assume \eqref{Ricci_N} and observe that if we assume that $\sigma$ is given by \eqref{warped_function}, then conditions \eqref{hp_ricci} and \eqref{hp_warped} are trivially satisfied, moreover also the compatibility condition is trivially satisfied.

This result improves the result in \cite{Ron} where the case of model manifolds and $k=0$ was considered. More precisely, in \cite{Ron} the problem is $\Delta u=-1$ on model manifolds with nonnegative Ricci curvature and such that $\sigma'>0$, moreover there is a compatibility condition which is exactly \eqref{comp} with $k=0$ (see \cite[Formula 3]{Ron}). The conclusion in \cite{Ron} is stronger than the one in Theorem \ref{teo_princ_bis}, indeed in \cite{Ron} the domain $\Omega$ is a ball and the metric in this ball is the Euclidean one.
This strong rigidity result will be generalized in our next result (see Theorem \ref{teo_princ_ter} in the sequel). 
To this end we observe that the model manifolds can be written as warped product manifolds with a pole, explicitly one takes: 
$$
I=[0,R) \quad \text{ with $R\leq+\infty$} \qquad \text{and} \quad N=\mathbb{S}^{n-1}\,\, \text{(endowed with its canonical metric)},
$$
together with the right hypothesis on the function $\sigma$ which makes the metric $g$ in \eqref{warped} smooth (see \cite[Definition 2]{Ron}). 
We are now in position to state our third and last result. It deals with model manifolds and provides a full generalization of \cite[Theorem 3]{Ron}. In particular, under the assumption that the pole of the model $o$ is inside the domain, we prove that the domain must be a geodesic ball around $o$ and that the metric in the ball has constant sectional curvatures.

\begin{theorem}\label{teo_princ_ter}
	Let $(M,g)$ be a model manifold (not necessarily complete) such that \eqref{hp_ricci} and \eqref{hp_warped} hold true. Let $\Omega\subset M$ be a domain 
	with boundary of class $C^1$ such that $\overline{\Omega}$ is compact. We assume that $o\in\Omega$. Let $u\in C^3(\Omega)\cap C^2(\overline{\Omega})$ be a solution to \eqref{pb_principale}-\eqref{cond_sovra}. If $u$ satisfies the compatibility condition \eqref{comp}. Then $\Omega$ is a metric ball centred at $o$ of radius $\rho$ and $u$ is a radial function given by
	\begin{equation}
	u(r)=\begin{cases}
	\frac{\cosh(\sqrt{-k}r)}{kn\cosh(\sqrt{-k}\rho)}-\frac{1}{nk} &\mbox{if } k<0\, , \\ \frac{\rho^2}{2n}-\frac{r^2}{2n} &\mbox{if } k=0, \\
	\frac{\cos(\sqrt{k}r)}{kn\cos(\sqrt{k}\rho)}-\frac{1}{nk} &\mbox{if } k>0 \, ,
	\end{cases}
	\end{equation}
	where $r$ is the geodesic distance from $o$.
	
	Moreover, the warping function $\sigma$ in the metric ball is given by the following expression:
	\begin{equation}\label{metric}
	\sigma(r)=\begin{cases}
	 \frac{\sinh(\sqrt{-k}r)}{\sqrt{-k}} &\mbox{if } k<0, \\ 
	 r &\mbox{if } k=0, \\ 
	 \frac{\sin(\sqrt{k}r)}{\sqrt{k}}&\mbox{if } k>0\, .
	\end{cases}
	\end{equation}
\end{theorem}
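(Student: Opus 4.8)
The plan is to run the Weinberger $P$-function argument already used for Theorem~\ref{teo_princ_bis}, extract its rigidity (equality) consequences, and then exploit the extra pole structure of the model to compute $\sigma$ and $u$ explicitly. Concretely, set
\[
P=|\nabla u|^2+\frac{2}{n}\,u+k\,u^2 .
\]
Using the equation $\Delta u=-1-nku$ in the Bochner formula together with the curvature bound \eqref{hp_ricci}, one checks that $P$ is subharmonic in $\Omega$; since $u=0$ and $|\nabla u|=c$ on $\partial\Omega$ we have $P\equiv c^2$ there, and the integral identity behind Theorem~\ref{teo_princ_bis}—in which the hypothesis \eqref{comp} is exactly what makes the relevant interior/boundary balance nonnegative—forces $P\equiv c^2$ on all of $\Omega$. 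Tracking the equalities in this argument yields the two pointwise identities
\[
\Hess u=\frac{\Delta u}{n}\,g=-\frac{1+nku}{n}\,g,\qquad \ric(\nabla u,\nabla u)=(n-1)k\,|\nabla u|^2
\]
wherever $\nabla u\neq0$. In particular $\Omega$ is a geodesic ball and $u$ is radial about its unique critical point, exactly as in Theorem~\ref{teo_princ_bis}.

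Next I would identify this critical point with the pole $o$. Writing the metric in polar coordinates about $o$ as $g=dr\otimes dr+\sigma^2 g_{\mathbb{S}^{n-1}}$ and using $\Hess u(\partial_r,X)=0$ for every $X$ tangent to the fibers, the warped connection identity $\nabla_{\partial_r}\partial_i=\tfrac{\sigma'}{\sigma}\partial_i$ gives $\partial_r(\partial_i u)=\tfrac{\sigma'}{\sigma}\,\partial_i u$, hence
\[
u(r,\theta)=\sigma(r)\,\phi(\theta)+B(r)
\]
for some function $\phi$ on $N=\mathbb{S}^{n-1}$ and some radial $B$. Feeding this into the trace-free part of the fiber block of $\Hess u=\tfrac{\Delta u}{n}g$ shows that $\Hess_{N}\phi$ is pure trace, so $\phi$ is a constant plus a first eigenfunction of $\mathbb{S}^{n-1}$. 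The crux is then to prove $\phi\equiv0$: since $\nabla u(o)$ is precisely the vector dual to the linear part of $\phi$, this amounts to showing that the pole $o$ \emph{is} the critical point of $u$, which I would deduce from the smoothness of $u$ at $o$ together with the standing assumption $o\in\Omega$ and the radial rigidity above. This identification of the center with the pole is the step I expect to be the main obstacle. Once $\phi\equiv0$ we have $u=B(r)$, so $u$ is radial about $o$ and $\Omega=B(o,\rho)$ for some $\rho>0$.

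With $u$ radial we have $\nabla u=u'(r)\,\partial_r$, while by \eqref{Ricci_warped} the radial eigenvalue of the Ricci tensor of the warped product is $\ric(\partial_r,\partial_r)=-(n-1)\tfrac{\sigma''}{\sigma}$. Substituting into the rigidity identity $\ric(\nabla u,\nabla u)=(n-1)k|\nabla u|^2$ at points where $u'\neq0$ gives $-\tfrac{\sigma''}{\sigma}=k$, i.e. $\sigma''+k\sigma=0$ on the range of radii covered by $\Omega$. Imposing the pole normalization $\sigma(0)=0$, $\sigma'(0)=1$ that makes $g$ smooth at $o$ singles out the space-form warping functions \eqref{metric}, proving the ``moreover'' part; this is consistent with \eqref{hp_warped} and automatically yields $(\sigma')^2+k\sigma^2\equiv1$.

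Finally I would recover $u$ by solving an ODE. For radial $u$ the equation $\Delta u+nku=-1$ reads
\[
u''+(n-1)\frac{\sigma'}{\sigma}\,u'+nk\,u=-1,
\]
subject to $u'(0)=0$ (regularity and symmetry at the center $o$) and $u(\rho)=0$ (the boundary condition). With $\sigma$ as in \eqref{metric} this is a linear second-order ODE whose regular solution is elementary, and solving it produces precisely the three displayed expressions for $u(r)$; the overdetermined condition is then automatic, since $|\nabla u|=|u'(\rho)|$ is constant on $\partial\Omega$. The only routine difficulty in this last stage is carrying the three cases $k<0,\ k=0,\ k>0$ through uniformly.
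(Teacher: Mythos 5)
Your overall route is the paper's: run the $P$-function/Pohozaev argument of Theorem \ref{teo_princ_bis} to force $P\equiv c^2$, extract the equality cases \eqref{harm1}--\eqref{harm2} from Lemma \ref{pre1}, conclude via Lemma \ref{Obata_lemma} that $\Omega$ is a geodesic ball $B_g(p,\rho)$ with $u$ radial about its maximum point $p$, and then use the model structure to pin down $\sigma$ and $u$. Your derivation of $\sigma$ from $\ric_M(\nabla u,\nabla u)=(n-1)k|\nabla u|^2$ together with $\ric_M(\partial_r,\partial_r)=-(n-1)\sigma''/\sigma$ is exactly the paper's \emph{alternative} proof (the Remark following the theorem); the main text instead matches $\nabla^2u=-(\tfrac1n+ku)g$ against $\nabla^2 f=f''\,dr\otimes dr+f'\sigma\sigma'\,g_{\mathbb{S}^{n-1}}$ for radial $f$, and your closing radial ODE for $u$ is equivalent to the paper's computation. (One point you pass over silently: the identity \eqref{pre2} was proved on warped products $I\times N$, and on a model manifold the integrations by parts involving $\sigma\partial_r$ must be re-justified across the pole; the paper does this by citing \cite{Ron}.) The genuine gap is precisely the step you flag as ``the main obstacle'': proving $p=o$, i.e.\ that the first-eigenfunction part of $\phi$ in your decomposition $u=\sigma(r)\phi(\theta)+B(r)$ vanishes. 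You leave this step to a hoped-for smoothness argument at $o$, and no such argument exists.

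Indeed $p=o$ does not follow from the hypotheses at all. Take the flat model $\sigma(r)=r$, $k=0$ (so \eqref{hp_ricci} and \eqref{hp_warped} hold, and the integrand of \eqref{comp} vanishes identically since $\sigma''\equiv0$), let $\Omega=B(p,\rho)$ with $0<|p|<\rho$, so that $o\in\Omega$, and set
\begin{equation*}
u(x)=\frac{\rho^2-|x-p|^2}{2n}\,.
\end{equation*}
Every hypothesis of Theorem \ref{teo_princ_ter} is satisfied, $u$ is smooth across the pole, your $\phi(\theta)=\langle p,\theta\rangle/n$ is a nonzero first eigenfunction (dual to $\nabla u(o)=p/n\neq0$), and the critical point is $p\neq o$; analogous off-center balls exist in the hyperbolic and spherical models, whose warping functions also annihilate the integrand of \eqref{comp}. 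So the obstacle is not technical: from the stated hypotheses one can only reach the conclusion of \cite{Ron}, namely that $\Omega$ is a metric ball about \emph{some} point $p$ and that, by Obata--Tashiro rigidity applied to \eqref{harm1}, the metric on $\Omega$ has constant sectional curvature $k$; the assertions ``centred at $o$'' and the formula for $u$ in terms of $d_g(\cdot,o)$ fail in the example above. Note that the paper's own proof elides exactly this point: it solves $y''=-\tfrac1n-ky$ along unit-speed geodesics $\gamma$ issuing from $o$ and asserts that $u(x)=y(d_g(x,o))$ with constants $\alpha,\beta$ independent of $x$, ``hence $p=o$''; but $\alpha$ is fixed by $y'(0)=g(\nabla u(o),\dot\gamma(0))$, which depends on the direction $\dot\gamma(0)$ unless $\nabla u(o)=0$ --- the very claim at stake (in the example, $y'(0)=\langle p,\dot\gamma(0)\rangle/n$). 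The subsequent computation showing $\nabla u$ is not $C^1$ at the pole unless $\alpha=0$ (or $\alpha=\beta$) likewise presupposes direction-independent constants and is contradicted by the smooth $u$ above. So your instinct about where the difficulty lies was correct; what is needed to complete the argument is not a finer regularity lemma but a strengthening of the hypotheses or a weakening of the stated conclusion.
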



Observe that in Theorem 4 if we assume that $\sigma$ is given by \eqref{metric} then \eqref{hp_ricci},\eqref{hp_warped} and the compatibility condition \eqref{comp} are automatically satisfied.

Before we get to the heart of the paper we comment about our Theorems.

\medskip

Theorems \ref{teo_princ} and \ref{teo_princ_ter} recover and improve the result in \cite{Ciraolo_Vezzoni} where the domain $\Omega$ was a bounded domain of one of the following three models : the Hyperbolic space $\mathbb{H}^n$, the Euclidean space $\mathbb{R}^n$ and the hemisphere $\mathbb{S}^n_+$. Indeed, these manifolds are model manifolds (take $I =[0,+\infty)$ for $\mathbb{H}^n$ and $\mathbb{R}^n$, $I = [0, \frac{\pi}{2})$ for $\mathbb{S}^n_+$ and $\sigma $ as in \eqref{metric}, with $ k \in \{-1,0,1\}$) and so we recover the results in \cite{Ciraolo_Vezzoni}  by applying Theorem \ref{teo_princ_ter}, if the pole belongs to $\Omega$, and by applying Theorem \ref{teo_princ} when the pole does not belong to $\Omega$ (in the latter we have seen $\Omega$ as a bounded domain of the warped product manifolds : $\mathbb{H}^n$ minus one point, $\mathbb{R}^n$ minus one point and $\mathbb{S}^n_+$ minus one point (the pole)).




Moreover, the authors of \cite{Ciraolo_Vezzoni} do not require that the solution of \eqref{pb_principale} is positive, indeed in the cases of $\mathbb{R}^n$ and of $\mathbb{H}^n$ this follows from the standard maximum principles, while in the case of $\mathbb{S}^n_+$ this follows from the fact that the first eigenvalue of the Dirichlet Laplacian on the hemisphere is $n$ and the corresponding eigenfunction is strictly positive (see \cite[Proof of Lemma 2.4]{Ciraolo_Vezzoni} for details).
This proves that the main result in \cite{Ciraolo_Vezzoni} is a special case of our Theorem \ref{teo_princ}  and Theorem \ref{teo_princ_ter}.

\medskip	

A case of interest related to our Theorem \ref{teo_princ_bis} and \ref{teo_princ_ter} is the following example. Consider $I=(a,b+\varepsilon)$, with $ 0 < a < b < \infty$, $ \varepsilon >0$, $N$ such that \eqref{Ricci_N} holds with $\rho \geq1$ and 
$$
\s(r)=\begin{cases}
r &\mbox{in } (a,b] , \\ r(1-e^{-\frac{1}{r-b}}) &\mbox{in } (b,b+\varepsilon) \, .
\end{cases}
$$
The metric given by $dr\otimes dr+\s^2(r)g_N$ is smooth and the corresponding warped product is a non complete Riemannian manifold (a rotationally symmetric smooth manifold if $N = \mathbb{S}^{n-1}$ is the unit sphere endowed with its canonical metric).
Moreover the warped product satisfies the hypothesis of Theorem \ref{teo_princ_bis}, indeed from a direct computation by making use of formula \eqref{Ricci_warped} we have, for $\varepsilon$ sufficiently small,
\begin{equation}
\begin{cases}
\ric_M =0 &\mbox{for any point in } (a,b] \times N, \\ 
\ric_M \geq 0 &\mbox{for any point in } (b, b+\varepsilon) \times N,\\
\end{cases}
\end{equation}
hence \eqref{hp_ricci} holds with $k=0$ and \eqref{hp_warped} is trivially satisfied. Now, the idea is to take the domain $\Omega$ in $(a,b)\times N$, where the metric is the Euclidean one, and so also \eqref{comp} is trivially satisfied. So, summing up, we have that this is an non trivial example in which our Theorem \ref{teo_princ_bis} works. 

This example also applies in the case of Theorem \ref{teo_princ_ter} (just consider the model manifold obtained by taking $I=[0,b+\varepsilon)$).

\medskip

The proof of all results is based on the following $P$-function (considered also in \cite{Weinberger} in \cite{Ciraolo_Vezzoni} and in \cite{Ron})
\begin{equation}\label{P-funzione}
P(u):=|\nabla u|^2+\dfrac{2}{n}u+ku^2 \, ,
\end{equation}
where $u$ is the solution to \eqref{pb_principale}-\eqref{cond_sovra}.

\medskip

\noindent {\bf Organization of the paper.} The paper is organized as folows: in Section \ref{Preliminaries} we prove general results related to warped product manifolds, in Section \ref{Principal_section} we prove the main results of the paper and in Appendix \ref{Appendix} we prove a general property of about the star-shapedness of geodesic balls inside a general Riemannian manifold.

\bigskip

\noindent {\bf Acknowledgements.} The authors wish to thank Stefano Pigola and Luigi Vezzoni for useful discussions. A.R. has been partially supported by the Gruppo Nazionale per l'Analisi Matematica, la Probabilit\'a e le loro Applicazioni (GNAMPA) of the Istituto Nazionale di Alta Matematica (INdAM). This manuscript was started while A.R. was visiting the LAMFA, Universit\'e de Picardie Jules Verne in Amiens, which is acknowledged for the hospitality.

\section{Preliminaries}\label{Preliminaries}

In this section we collect preliminaries results that we will use to prove our Theorems. In Subsection \ref{sub_Obata} we prove an Obata-type rigidity result in a very general context. In Subsection \ref{sub_Poho} we prove a Pohozaev-type identity which is the fundamental ingredients in the proofs of all Theorems.

\subsection{An Obata-type result}\label{sub_Obata}

A key point in the proof is that the $P$-function \eqref{P-funzione} is subharmonic accrodingly to the following lemma, moreover we are able to characterize the harmonicity of $P$; this is a very general result which holds true in every Riemannian manifold with Ricci curvature bounded from below.

\begin{lemma}\label{pre1}
	Let $(M,g)$ be an $n$-dimensional Riemannian manifold (not necessarily complete) such that
	\begin{equation}\label{Ricci_general}
	\ric_M\geq (n-1)kg\quad \text{ for $k\in\mathbb{R}$.}
	\end{equation}
	Let $\Omega\subset M$ be a domain and let $u\in C^2(\Omega)$ be a solution to 
	\begin{equation}\label{Delta u}
	\Delta u + nku=-1 \quad \text{ in $\Omega$.} 
	\end{equation}
	Then 
	$$
	\Delta P(u)\geq 0 \quad \text{ in $\Omega$.} 
	$$
	where $P$ is given by \eqref{P-funzione}. Moreover, 
	$$
	\Delta P(u)= 0
	$$ 
	if and only if 
	\begin{equation}\label{harm1}
	\nabla^2 u=-\left(\dfrac{1}{n}+ku\right)g \, \quad \text{in $\Omega$,}
	\end{equation}
	and 
	\begin{equation}\label{harm2}
	\ric_M(\nabla u,\nabla u)=(n-1)k|\nabla u|^2\, \quad \text{in $\Omega$;}
	\end{equation}
	where $\nabla^2$ denotes the Hessian matrix and $|\cdot|^2=g(\cdot,\cdot)$.
\end{lemma}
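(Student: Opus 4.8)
The plan is to compute $\Delta P(u)$ directly using the Bochner--Weitzenb\"ock formula and then identify the nonnegative terms. Recall the classical Bochner formula
\begin{equation*}
\tfrac{1}{2}\Delta |\nabla u|^2 = |\nabla^2 u|^2 + g(\nabla u, \nabla \Delta u) + \ric_M(\nabla u, \nabla u)\, .
\end{equation*}
Since $P(u) = |\nabla u|^2 + \tfrac{2}{n} u + k u^2$, I would apply $\Delta$ term by term, using $\Delta u = -1 - nku$ from \eqref{Delta u} to eliminate $\Delta u$ wherever it appears. In particular $\nabla \Delta u = -nk \nabla u$, so $g(\nabla u, \nabla \Delta u) = -nk |\nabla u|^2$; and $\Delta(u^2) = 2u\Delta u + 2|\nabla u|^2 = 2u(-1-nku) + 2|\nabla u|^2$. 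Collecting all the contributions, the first-order and zeroth-order pieces should combine so that the curvature-independent terms telescope, leaving
\begin{equation*}
\tfrac{1}{2}\Delta P(u) = |\nabla^2 u|^2 - \tfrac{1}{n}(\Delta u)^2 + \bigl(\ric_M(\nabla u,\nabla u) - (n-1)k|\nabla u|^2\bigr)\, .
\end{equation*}
Here I have used that $\tfrac{1}{n}(\Delta u)^2 = \tfrac{1}{n}(1+nku)^2$ reproduces exactly the lower-order terms picked up from differentiating $\tfrac{2}{n}u$ and $ku^2$; verifying this bookkeeping is the one routine computation that must be done carefully.

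The nonnegativity then follows from two independent facts. First, the Cauchy--Schwarz inequality for symmetric tensors gives $|\nabla^2 u|^2 \geq \tfrac{1}{n}(\Tr \nabla^2 u)^2 = \tfrac{1}{n}(\Delta u)^2$, so the first two terms are nonnegative. Second, the curvature hypothesis \eqref{Ricci_general} gives exactly $\ric_M(\nabla u, \nabla u) \geq (n-1)k|\nabla u|^2$, so the last parenthesis is nonnegative. Adding these yields $\Delta P(u) \geq 0$.

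For the equality characterization, observe that $\Delta P(u) = 0$ forces both nonnegative groups to vanish pointwise. The vanishing of the curvature term is precisely \eqref{harm2}. The vanishing of $|\nabla^2 u|^2 - \tfrac{1}{n}(\Delta u)^2$ is the equality case of the trace Cauchy--Schwarz inequality, which holds if and only if $\nabla^2 u$ is a pointwise multiple of the metric, i.e. $\nabla^2 u = \tfrac{\Delta u}{n} g$; substituting $\Delta u = -1 - nku$ gives $\nabla^2 u = -\bigl(\tfrac{1}{n} + ku\bigr) g$, which is \eqref{harm1}. The converse is immediate since both conditions make the respective terms vanish.

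I do not anticipate a genuine obstacle here: the whole statement is a pointwise algebraic consequence of Bochner's formula together with the equality case of the trace inequality. The only point requiring attention is the correct cancellation of the lower-order terms when assembling $\Delta P(u)$, and the verification that the equality case of $|\nabla^2 u|^2 \geq \tfrac{1}{n}(\Delta u)^2$ is indeed umbilicity of the Hessian (equivalently, that the trace-free part of $\nabla^2 u$ vanishes). Since $u \in C^2(\Omega)$ and $\Omega$ is open, all derivatives are classically defined and the argument is purely local, so no completeness or boundary considerations enter into this lemma.
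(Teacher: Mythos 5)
Your proposal is correct and follows essentially the same route as the paper: the Bochner--Weitzenb\"ock formula, the trace Cauchy--Schwarz inequality $|\nabla^2 u|^2\geq \tfrac{1}{n}(\Delta u)^2$, and the Ricci lower bound \eqref{Ricci_general}, with the equation \eqref{Delta u} used to eliminate $\Delta u$ and $\nabla\Delta u$. The only difference is organizational: you first assemble the exact pointwise identity $\tfrac{1}{2}\Delta P(u)=\bigl(|\nabla^2 u|^2-\tfrac{1}{n}(\Delta u)^2\bigr)+\bigl(\ric_M(\nabla u,\nabla u)-(n-1)k|\nabla u|^2\bigr)$ and then invoke the two inequalities, whereas the paper chains the inequalities inline and reads off the equality case afterwards; your formulation makes the characterization of $\Delta P(u)=0$ slightly more transparent, but the substance is identical.
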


\begin{proof}
	From the Bochner-Weitzenboch formula and Cauchy-Schwarz inequality we get 
	\begin{equation*}
	\begin{aligned}
	\Delta|\nabla u|^2=&2|\nabla^2 u|^2+2g(\nabla(\Delta u),\nabla u)+2\ric_M(\nabla u,\nabla u)\\
	\geq & \dfrac{2}{n}(\Delta u)^2+2g(\nabla(\Delta u),\nabla u)+2\ric_M(\nabla u,\nabla u)\, .
	\end{aligned}
	\end{equation*}
	From \eqref{Ricci_general} we obtain
	\begin{equation}\label{Cat}
	\begin{aligned}
	\Delta|\nabla u|^2\geq \dfrac{2}{n}(\Delta u)^2+2g(\nabla(\Delta u),\nabla u)+2(n-1)k|\nabla u|^2\, .
	\end{aligned}
	\end{equation}
	Since $\Delta u=-1-nku$, then \eqref{Cat} becomes
	\begin{equation}
	\begin{aligned}
	\Delta|\nabla u|^2&\geq \dfrac{2}{n}\Delta u(-1-nku)+2g(\nabla(-1-nku),\nabla u)+2(n-1)k|\nabla u|^2 \\
	&=-\dfrac{2}{n}\Delta u-2ku\Delta u -2k|\nabla u|^2 \\
	&=-\dfrac{2}{n}\Delta u-k\Delta(u^2)\, ,
	\end{aligned}
	\end{equation} 
	where in the last equality we use the fact that $\Delta(u^2)=2|\nabla u|^2+2u\Delta u$. Hence $\Delta P(u)\geq 0$. From the argument above it is clear that $\Delta P(u)= 0$ if and only if \eqref{harm2} holds and the equality in Cauchy-Schwarz inequality holds, i.e.
	$$
	n|\nabla^2 u|=(\Delta u)^2 
	$$
	which, by using \eqref{Delta u}, implies \eqref{harm1}.
\end{proof}

In order to prove the $P$ is harmonic we will use a Pohozaey-type identity (see Proposition \ref{Pohozaev} below); once \eqref{harm1} holds then the conclusions of Theorems \ref{teo_princ}, \ref{teo_princ_bis} and \ref{teo_princ_ter} will follow. In the next lemma we prove that if $u$ solves \eqref{harm1} in $\Omega$, then $\Omega$ must be a metric ball and $u$ is a radial function, i.e. $u$ depends only on the distance from the center of the ball.

\begin{lemma}\label{Obata_lemma}
	Let $(M,g)$ be an $n$-dimensional Riemannian manifold (not necessarily complete)
	and let $\Omega$ be a domain in $M$ such that $\overline{\Omega}$ is compact. 	
Assume that there exists a function $u:\bar{\Omega}\rightarrow\mathbb{R}$ such that $u\in C^0(\bar{\Omega})\cap C^2(\Omega)$ and is a solution to
	\begin{equation}\label{Obata}
	\begin{cases}
	\nabla^2 u=-\left(\dfrac{1}{n}+ku\right)g &\mbox{in } \Omega, \\ u>0 &\mbox{in } \Omega,\\ u= 0 &\mbox{on } \partial\Omega\, ,
	\end{cases}
	\end{equation}
	where $k\in\mathbb{R}$. Then $\Omega$ is a metric ball and $u$ is a radial function, i.e. $u$ depends only on the distance from the center of the ball.
\end{lemma}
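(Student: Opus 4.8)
The plan is to exploit the strong rigidity of the Hessian equation in \eqref{Obata} by feeding it into the same $P$-function \eqref{P-funzione} used throughout the paper. First I would produce an interior maximum: since $\overline{\Omega}$ is compact, $u\in C^0(\overline{\Omega})$, $u>0$ in $\Omega$ and $u=0$ on $\partial\Omega$, the maximum $m:=\max_{\overline\Omega}u>0$ is attained at some $o\in\Omega$ with $\nabla u(o)=0$. A direct computation from \eqref{Obata} gives $\tfrac12\nabla|\nabla u|^2=\nabla^2u(\nabla u,\cdot)=-(\tfrac1n+ku)\nabla u$, so that $\nabla P(u)=0$ and $P(u)\equiv P(u)(o)=\tfrac2n m+km^2$ on the connected set $\Omega$. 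Rearranging yields the pointwise identity
\[
|\nabla u|^2=(m-u)\Big(\tfrac2n+k(m+u)\Big)\qquad\text{in }\Omega .
\]
Since $0\le u\le m$, the second factor is at least $\tfrac2n$ if $k\ge0$ and at least $2(\tfrac1n+km)$ if $k<0$; I would check $\tfrac1n+km>0$ (at the maximum $\nabla^2u(o)=-(\tfrac1n+km)g\le0$ gives $\tfrac1n+km\ge0$, while the degenerate case $\tfrac1n+km=0$ makes the profile below constant, forcing $u\equiv m$ on $\Omega$ and contradicting $u=0$ on $\partial\Omega$). Hence the second factor is strictly positive, $\nabla u$ vanishes exactly on $\{u=m\}$, and $o$ is a nondegenerate maximum.

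Next I would extract the radial structure along geodesics issued from $o$. For a unit $v\in T_oM$ the geodesic $\gamma_v(t)=\exp_o(tv)$ (which exists and stays in the compact set $\overline\Omega$) satisfies $f(t):=u(\gamma_v(t))$ with $f''=\nabla^2u(\gamma_v',\gamma_v')=-\tfrac1n-kf$, i.e. the linear equation $f''+kf=-\tfrac1n$ with $f(0)=m$, $f'(0)=0$. Its solution $h(t)$ is \emph{independent of the direction $v$}, strictly decreasing for $t>0$, and reaches $0$ at a first time $t=\rho>0$ (explicitly $h$ is the radial profile displayed in Theorem~\ref{teo_princ_ter}). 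A first integral of the equation gives $(h')^2=(m-h)(\tfrac2n+k(m+h))$, which is exactly $|\nabla u|^2$ along $\gamma_v$; since $h'\le0$ this forces $\langle\nabla u,\gamma_v'\rangle=h'=-|\nabla u|$, and the Cauchy--Schwarz equality case yields $\nabla u=-|\nabla u|\,\gamma_v'$. Thus $\nabla u$ is everywhere radial and $u$ is constant on geodesic spheres about $o$.

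To globalise this I would set $r:=h^{-1}\circ u$, smooth on $\Omega\setminus\{o\}$. From $|\nabla u|=|h'|$ one gets $|\nabla r|\equiv1$ and $\nabla r=\gamma_v'$, so $r$ is a smooth distance function whose gradient flow lines are unit-speed geodesics ending at $o$; flowing $-\nabla r$ from any $x$ gives a curve to $o$ of length $r(x)$, and $|\nabla r|=1$ bounds $r$ from below by $d(o,\cdot)$, so $r=d(o,\cdot)$ and $u=h(d(o,\cdot))$ is radial. As $u>0=h(\rho)$ this already gives $\Omega\subseteq B(o,\rho)$. For the reverse inclusion I would note that each $\gamma_v$ remains in $\Omega$ for $t<\rho$ (it cannot reach $\partial\Omega$ earlier, for there $u=0=h(t)$ would force $t=\rho$) and leaves exactly at $t=\rho$, whence $\exp_o(B_\rho(0))\subseteq\Omega$; moreover the smoothness of $r$ up to $r=\rho$ precludes conjugate and cut points inside $\Omega$. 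This exhibits $\exp_o$ as a diffeomorphism of $B_\rho(0)$ onto the metric ball $B(o,\rho)=\Omega$, with $o$ necessarily the unique maximum, which is the assertion.

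The hard part is this final, global step: upgrading the infinitesimal information — valid separately along each geodesic from $o$ — to the conclusion that $\Omega$ is \emph{exactly} a metric ball and that $\exp_o$ is a true diffeomorphism with no cut locus inside $\Omega$. This is a star-shapedness statement for geodesic balls, and it is precisely what the result proved in Appendix~\ref{Appendix} is meant to provide; the possible incompleteness of $(M,g)$ causes no trouble, since all the geodesics involved remain in the compact set $\overline\Omega$.
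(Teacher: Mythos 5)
Your local analysis is correct and coincides with the paper's key mechanism: at an interior maximum $o$ (which exists because $\overline\Omega$ is compact, $u>0$ in $\Omega$, $u=0$ on $\partial\Omega$), every unit-speed geodesic issued from $o$ carries $u$ onto the direction-independent profile $h$ solving $h''+kh=-\tfrac1n$, $h(0)=m$, $h'(0)=0$; and your observation that \eqref{Obata} alone forces $\nabla P(u)\equiv 0$ is correct (the paper does not need it). The genuine gaps are in your globalization. First, you tacitly assume that $o$ is the only critical point of $u$: what you have shown is that $\nabla u$ vanishes exactly on $\{u=m\}$, which is a priori a finite set of nondegenerate maxima, not the single point $o$; hence $r=h^{-1}\circ u$ is smooth only off this set, and a flow line of $-\nabla r$ started at $x$ is only known to terminate at \emph{some} maximum point, not at $o$. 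This is fixable (the basins of the finitely many maxima are disjoint open sets covering $\Omega\setminus\{u=m\}$, which is connected for $n\geq2$), but the step is missing and you use its consequence $d(o,x)\le r(x)$ as if established. Second, and more seriously, the final identification $\Omega=B(o,\rho)$ is asserted rather than proved. Your argument yields $r=d_\Omega(o,\cdot)$, the \emph{intrinsic} distance of $\Omega$ (note also that $|\nabla r|=1$ bounds $r$ from \emph{above} by $d_\Omega(o,\cdot)$; the lower bound comes from the flow curve, not from $|\nabla r|=1$), together with $\exp_o(B_\rho(0))\subseteq\Omega$; but to conclude that $\Omega$ is a metric ball of $(M,g)$ you still need $B_g(o,\rho)\subseteq\Omega$, i.e.\ that every point at $d_g$-distance less than $\rho$ from $o$ lies in $\Omega$. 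Since $(M,g)$ may be incomplete, this Hopf--Rinow-type statement is not automatic, and your appeal to Appendix \ref{Appendix} does not supply it: Lemma \ref{lemmaGeod1} and Proposition \ref{LemmaGeod2} produce minimizing geodesics inside balls \emph{already known} to be contained in $\Omega$, and say nothing about surjectivity of $\exp_o$, cut or conjugate points, or the inclusion $B_g(o,\rho)\subseteq\Omega$. (A possible fix, absent from your proposal: any curve from $o$ of length $<\rho$ must stay in $\Omega$, because $r$ is $1$-Lipschitz along curves in $\Omega$ while $u\to0$, hence $r\to\rho$, at $\partial\Omega$.)

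By contrast, the paper reverses the logic so that these issues never arise: it sets $\rho:=d_g(p,\partial\Omega)$, so that $B_g(p,\rho)\subseteq\Omega$ holds by construction; it applies the Appendix to get unit-speed minimizing geodesics from $p$ to every point of the spheres $\partial B_g(p,\rho')$, $\rho'\leq\rho$; ODE uniqueness then gives $u=h(d_g(p,\cdot))$ on $B_g(p,\rho)$, and evaluating at a closest boundary point $q\in\partial\Omega\cap\partial B_g(p,\rho)$ gives $h(\rho)=u(q)=0$, hence $u\equiv0$ on the whole sphere $\partial B_g(p,\rho)$; finally $\Omega=B_g(p,\rho)$ follows from connectedness of $\Omega$ and $u>0$ in $\Omega$. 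No gradient flow, no exponential-map surjectivity, no cut-locus discussion and no uniqueness of the maximum are needed. To salvage your structure you must add the two missing arguments above; otherwise the paper's route is the economical one.
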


The previous result can be seen as a local version of the well-known Obata-Tashiro-Kanai's results (see \cite{Obata,Tashiro,Kanai}). 

\begin{proof}[Proof of Lemma \ref{Obata_lemma}]
	Being $u > 0$ in $\Omega$ and $\overline{\Omega}$ compact we have that $u$ achieves its maximum at a point $p\in\Omega$, with $u(p)> 0$. Since $\overline{\Omega}$ is compact, we consider the open ball $B_g(p,\rho)$ 
	centered at $p$ and of radius $\rho=dist(p,\partial\Omega)>0$ and we call $q\in\partial\Omega$ a point that realizes this distance. Take $q_1$ and $q_2$ in $\partial B_g(p,\rho')$ where $0<\rho'<\rho$. By Lemma \ref{lemmaGeod1} in Appendix \ref{Appendix}, for $i=1,2$, there is a unit speed minimizing geodesic  $\gamma_i:[0,\rho']\rightarrow B_g(p,\rho')$ such that $\gamma_i(0)=p$ and $\gamma_i(\rho')=q_i$.  
	
	For $i=1,2$, we set $f_i(s)=u(\gamma_i(s))$ and so, from the first equation in \eqref{Obata} and the fact that $ \overline {B_g(p,\rho')} \subset \Omega$, we have that $f_i$ satisfies 
	\begin{equation}\label{Obata_geodetica}
	f_i''(s)=-\dfrac{1}{n}-kf_i(s)\, ,
	\end{equation}
	moreover
	\begin{equation}\label{Obata_geodetica-cond-in}
	f_i'(0)=0 \,  \quad  \text{ and } \quad f_i(0)=u(p)\, .
	\end{equation}
	Therefore, 
	$u$ has the same expression along $\gamma_1$ and $\gamma_2$, hence $u$ is constant on the spheres $\partial B_g(p,\rho')$ for any $0<\rho'<\rho$.
	
	Now we take $q_1 \in \partial B_g(p,\rho)$, we recall that also the point $ q $ belongs to $ \partial B_g(p,\rho)$, we set $ q_2 =q$ and we apply Proposition \ref{LemmaGeod2} in Appendix \ref{Appendix} to find, for $i=1,2$, the existence of  a unit speed minimizing geodesic  $\gamma_i:[0,\rho]\rightarrow B_g(p,\rho)$ such that $\gamma_i(0)=p$, $\gamma_i(\rho)=q_i$ and $ \gamma_i (t) \in B_g(p,\rho)$ for any $t \in [0, \rho).$
	Therefore, arguing as before, the functions $f_i(s)=u(\gamma_i(s))$ satisfy \eqref{Obata_geodetica} on $ [0, \rho) $  together with \eqref{Obata_geodetica-cond-in}. The continuity of $f_i$ on $ [0,\rho]$ then implies $ u(q_1) = u(q_2) = u(q) =0$, since $ q \in \partial \Omega$.  The latter proves that $u \equiv0$ on $\partial B_g(p,\rho)$ and so we conclude that $\Omega$ must be equal to $B_g(p,\rho)$. Since we already know that $u$ is constant on the spheres $\partial B_g(p,\rho') $, for any $0 <\rho' <\rho$, we also proved that $u$ depends only on the distance from $p$.
		
\end{proof}


\subsection{A Pohozaev-type identity}\label{sub_Poho}

As already mentioned we will use the following Pohozaev-type identity (similar results can be found in \cite{Ciraolo_Vezzoni, Ron}) which holds on general warped product manifolds

\begin{proposition}\label{Pohozaev}
	Let $(M,g$) be a warped product manifold (not necessarily complete). Let  $\Omega\subset M$ be a bounded domain with boundary of class $C^1$ and let $u\in C^3(\Omega)\cap C^2(\overline{\Omega})$ be a solution to
	\begin{equation}\label{pb_principale_bis}
	\begin{cases}
	\Delta u + nku=-1 &\mbox{in } \Omega, \\ u= 0 &\mbox{on } \partial\Omega\, , 
	\end{cases}
	\end{equation}
	where $k\in\mathbb{R}$ and such that, for some constant $c$,
	\begin{equation}\label{cond_sovra_bis}
	|\nabla u|=c \quad \text{on $\partial\Omega$.}
	\end{equation}
	Then the following identity
	\begin{equation}\label{pre2}
	\dfrac{(n+2)}{n}\int_{\Omega}\sigma' u=c^2\int_{\Omega}\sigma'+\dfrac{n-2}{2n}\int_{\Omega}\dfrac{(\sigma''\sigma^{n-1})'}{\sigma^{n-1}}u^2-2k\int_{\Omega}\sigma' u^2
	\end{equation}
	holds true.
\end{proposition}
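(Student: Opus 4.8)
The plan is to run a Rellich--Pohozaev argument driven by the \emph{conformal position field} $X:=\s\,\pr$, which is the warped-product analogue of the Euclidean position vector. First I would record the structural facts supplied by the warped geometry: since $\nabla_{\pr}\pr=0$ and $\nabla_Z\pr=\frac{\s'}{\s}Z$ for $Z$ tangent to the fiber, one obtains $\nabla_Z X=\s'(r)\,Z$ for every vector field $Z$; in particular $X$ is conformal with $\dive X=n\s'$, and $\langle X,\nabla u\rangle=\s\,\pr u$. I also need the divergence of $\pr$, namely $\dive\pr=(n-1)\frac{\s'}{\s}$, whence
\[
\dive(\s''\pr)=\s'''+(n-1)\frac{\s'\s''}{\s}=\frac{(\s''\s^{n-1})'}{\s^{n-1}}.
\]
This last identity is exactly the device that manufactures the coefficient appearing in \eqref{pre2}.

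Next I would evaluate $\int_{\Omega}\Delta u\,\langle X,\nabla u\rangle$ in two ways. Integrating by parts via $\Delta u\,\langle X,\nabla u\rangle=\dive(\langle X,\nabla u\rangle\nabla u)-\langle\nabla\langle X,\nabla u\rangle,\nabla u\rangle$, using $\langle\nabla\langle X,\nabla u\rangle,\nabla u\rangle=\s'|\nabla u|^2+\tfrac12 X(|\nabla u|^2)$ (which follows from $\nabla_Z X=\s'Z$ and the symmetry of $\nabla^2 u$), and then integrating the term $X(|\nabla u|^2)$ by parts with $\dive X=n\s'$, yields
\[
\int_{\Omega}\Delta u\,\langle X,\nabla u\rangle=\int_{\partial\Omega}\langle X,\nabla u\rangle\,\pn u-\tfrac12\int_{\partial\Omega}|\nabla u|^2\langle X,\nu\rangle+\tfrac{n-2}{2}\int_{\Omega}\s'|\nabla u|^2.
\]
On $\partial\Omega$ the condition $u\equiv0$ forces $\nabla u=(\pn u)\nu$ and the overdetermination gives $|\nabla u|^2=c^2$, so the two boundary integrals collapse to $\tfrac{c^2}{2}\int_{\partial\Omega}\langle X,\nu\rangle=\tfrac{nc^2}{2}\int_{\Omega}\s'$ by the divergence theorem. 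On the other hand, substituting $\Delta u=-1-nku$ and integrating $\int_{\Omega}\langle X,\nabla u\rangle=-n\int_{\Omega}\s' u$ and $\int_{\Omega}u\langle X,\nabla u\rangle=-\tfrac{n}{2}\int_{\Omega}\s' u^2$ by parts (boundary terms vanishing since $u|_{\partial\Omega}=0$) gives $\int_{\Omega}\Delta u\,\langle X,\nabla u\rangle=n\int_{\Omega}\s' u+\tfrac{n^2k}{2}\int_{\Omega}\s' u^2$.

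It remains to eliminate the interior Dirichlet-energy term $\int_{\Omega}\s'|\nabla u|^2$. For this I would integrate by parts once more, writing $\int_{\Omega}\s'|\nabla u|^2=-\int_{\Omega}u\,\dive(\s'\nabla u)=-\int_{\Omega}u\,(\s''\,\pr u+\s'\Delta u)$, and then rewriting $\int_{\Omega}u\,\s''\,\pr u=\tfrac12\int_{\Omega}\langle\s''\pr,\nabla(u^2)\rangle=-\tfrac12\int_{\Omega}\dive(\s''\pr)\,u^2$, invoking the divergence identity for $\s''\pr$ from the first paragraph together with $u|_{\partial\Omega}=0$. Using $\Delta u=-1-nku$ again, this expresses $\int_{\Omega}\s'|\nabla u|^2$ purely through $\int_{\Omega}\s' u$, $\int_{\Omega}\s' u^2$ and $\int_{\Omega}\frac{(\s''\s^{n-1})'}{\s^{n-1}}u^2$. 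Equating the two evaluations of $\int_{\Omega}\Delta u\,\langle X,\nabla u\rangle$, substituting this expression, collecting the $\int_{\Omega}\s' u$ and $\int_{\Omega}\s' u^2$ terms (the coefficients combine to $\tfrac{n+2}{2}$ and $nk$ respectively) and multiplying through by $2/n$ produces precisely \eqref{pre2}.

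The only genuinely delicate points are bookkeeping ones: computing the warped-product covariant derivatives correctly so as to obtain the two divergences $\dive X=n\s'$ and $\dive(\s''\pr)=\frac{(\s''\s^{n-1})'}{\s^{n-1}}$, and reducing the boundary integrals with the aid of $u=0$ and $|\nabla u|=c$ on $\partial\Omega$. No completeness of $M$ is required, since every integration by parts takes place on the relatively compact domain $\overline{\Omega}$ with $C^1$ boundary.
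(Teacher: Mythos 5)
Your proof is correct, and it takes a genuinely different (though closely related) route from the paper's. The paper first proves the commutation formula $\Delta(\s\,\pr u)=\s\,\pr\Delta u+2\s'\Delta u+(2-n)\s''\pr u$ (Lemma \ref{lemma1}) by a direct computation with the coordinate expression \eqref{Laplace-Beltrami} of the warped Laplacian, and then applies Green's second identity to the pair $(u,\s\pr u)$; in that scheme the Dirichlet energy $\int_{\Omega}\s'|\nabla u|^2$ never appears, since it cancels inside Green's identity. You instead run the classical Rellich--Pohozaev argument on $\int_{\Omega}\Delta u\,\langle X,\nabla u\rangle$ with $X=\s\pr$, using only the covariant facts $\nabla X=\s'\,\mathrm{Id}$ (i.e.\ $X$ is a closed conformal field) and the symmetry of $\nabla^2u$; the price is the interior term $\tfrac{n-2}{2}\int_{\Omega}\s'|\nabla u|^2$, which you eliminate by one more integration by parts --- an identity which is in fact the combination of \eqref{17} and \eqref{eq7} that the paper only uses later, in the proof of Theorem \ref{teo_princ}. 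Both arguments share the auxiliary identities \eqref{eq6} and \eqref{eq7}, and your bookkeeping checks out: equating your two evaluations gives $\tfrac{nc^2}{2}\int_{\Omega}\s'+\tfrac{n-2}{2}\int_{\Omega}\s'|\nabla u|^2=n\int_{\Omega}\s' u+\tfrac{n^2k}{2}\int_{\Omega}\s' u^2$, and after substituting the energy identity the coefficients combine as $n-\tfrac{n-2}{2}=\tfrac{n+2}{2}$ and $\tfrac{n^2k}{2}-\tfrac{n(n-2)k}{2}=nk$, so multiplying by $2/n$ yields exactly \eqref{pre2}. What your route buys: it is coordinate-free, bypassing entirely the computation of $\Delta(\s\pr u)$ (the most calculation-heavy part of the paper's proof), and it never differentiates $u$ more than twice, so it would in fact go through for $u\in C^2(\overline{\Omega})$ without the $C^3(\Omega)$ hypothesis needed to make Lemma \ref{lemma1} meaningful pointwise. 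What the paper's route buys: once the commutation formula is established, Green's identity disposes of the energy term automatically, so no elimination step is required.
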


Before proving it, we recall that the Laplace-Beltrami operator $\Delta$ on a warped product manifold $(M,g)$ acts on $C^2$-functions $u:M\rightarrow\mathbb{R}$ as follows:
\begin{equation}\label{Laplace-Beltrami}
\Delta u=\partial^2_r u+ (n-1)\dfrac{\s'}{\s}\partial_r u+\dfrac{1}{\s^2}\Delta_N u\, ,
\end{equation}
where $\Delta_N$ denotes the Laplace-Beltrami operator on the Riemannian manifold $(N,g_{N})$. Using this representation of the Laplace-Beltrami operator we can easily compute $\Delta(\s\partial_ru)$. 
\begin{lemma}\label{lemma1}
	Under the assumptions of Proposition \ref{Pohozaev} 
	the following formula holds 
	\begin{equation}\label{eq2}
	\Delta(\s \, \pr u) =  \s\, \pr \Delta u + 2\s'\, \Delta u + (2-n) \s''\, \pr u.
	\end{equation}
	true for every $C^3$-function $u:\Omega\rightarrow\mathbb{R}$.

	\begin{proof}[Proof of Lemma \ref{lemma1}]
		We compute 
		\begin{align*}
		\s \pr (\Delta u) =& \s \left\lbrace\pr^{3}u+(n-1)\dfrac{\s''\s-(\s')^{2}}{\s^{2}}\pr u+(n-1)\dfrac{\s'}{\s}\pr^2 u-2\dfrac{\s'}{\s^3} \Delta_N u + \dfrac{1}{\s^2}\pr(\Delta_N u)\right\rbrace\\
		=&\s\pr^3 u+(n-2)\s''\pr u+\s''\pr u-2(n-1)\dfrac{(\s')^2}{\s}\pr u+(n-1)\dfrac{(\s')^2}{\s}\pr u+\\
		&+(n+1)\s'\pr^2 u-2\s'\pr^2 u -2\dfrac{\s'}{\s^2}\Delta_N u+\dfrac{1}{\s}\pr(\Delta_N u)+\dfrac{1}{\s^2}\Delta_N(\s\pr u)-\dfrac{1}{\s^2}\Delta_N(\s\pr u)\\
		=& \Delta(\s\pr u)+(n-2)\s''\pr u-2\s'\Delta u+\dfrac{1}{\s}\pr(\Delta_N u)-\dfrac{1}{\s^2} \Delta_N(\s\pr u), 
		\end{align*}
		i.e.
		\begin{equation*}
		\Delta(\s\pr u)=\s \pr (\Delta u)+(2-n)\s''\pr u+2\s'\Delta u.
		\end{equation*}
	\end{proof}
	
\end{lemma}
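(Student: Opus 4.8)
The plan is to establish \eqref{eq2} by a direct pointwise computation resting only on the explicit form \eqref{Laplace-Beltrami} of the Laplace--Beltrami operator on a warped product. The identity is local, so the whole matter reduces to expanding two expressions and comparing them: first $\s\,\pr(\Delta u)$, obtained by differentiating \eqref{Laplace-Beltrami} in $r$ and multiplying by $\s$; and second $\Delta(\s\,\pr u)$, obtained by feeding the function $v:=\s\,\pr u$ into \eqref{Laplace-Beltrami}. The goal is to show that the difference $\s\,\pr(\Delta u)-\Delta(\s\,\pr u)$ equals exactly $(n-2)\s''\,\pr u-2\s'\,\Delta u$, which is \eqref{eq2} after transposing $\Delta(\s\,\pr u)$.

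The only structural ingredient beyond the product rule in $r$ is that $\pr$ commutes with $\Delta_N$: since $\Delta_N$ acts purely in the fiber with coefficients depending on the $N$-variables alone, while $\pr$ differentiates in $r$, equality of mixed partials yields $\pr(\Delta_N u)=\Delta_N(\pr u)$; moreover $\s=\s(r)$ can be pulled through $\Delta_N$ freely. Together these give the bridging relation $\frac{1}{\s}\pr(\Delta_N u)=\frac{1}{\s^2}\Delta_N(\s\,\pr u)$, which lets the fiber-Laplacian contributions on the two sides be matched and cancelled.

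Concretely, differentiating \eqref{Laplace-Beltrami} and multiplying by $\s$ produces $\s\,\pr^3 u+(n-1)\s''\,\pr u-(n-1)\frac{(\s')^2}{\s}\pr u+(n-1)\s'\,\pr^2 u-2\frac{\s'}{\s^2}\Delta_N u+\frac{1}{\s}\pr(\Delta_N u)$, where I have used $\pr(\s'/\s)=(\s''\s-(\s')^2)/\s^2$ and $\pr(\s^{-2})=-2\s'/\s^3$. On the other hand, applying \eqref{Laplace-Beltrami} to $v=\s\,\pr u$ and using $\pr^2(\s\,\pr u)=\s''\pr u+2\s'\pr^2 u+\s\,\pr^3 u$ together with the bridging relation gives $\Delta(\s\,\pr u)=\s\,\pr^3 u+\s''\pr u+2\s'\pr^2 u+(n-1)\frac{(\s')^2}{\s}\pr u+(n-1)\s'\pr^2 u+\frac{1}{\s}\pr(\Delta_N u)$. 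Subtracting, the $\s\,\pr^3 u$ and $\frac{1}{\s}\pr(\Delta_N u)$ terms cancel and the $\s''$ terms collapse to $(n-2)\s''\pr u$.

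The crux is recognizing that the remaining surplus is precisely $-2\s'\,\Delta u$. After subtraction the leftover first- and fiber-order terms are $-2(n-1)\frac{(\s')^2}{\s}\pr u-2\s'\pr^2 u-2\frac{\s'}{\s^2}\Delta_N u$, and the point is that this equals $-2\s'$ times $\pr^2 u+(n-1)\frac{\s'}{\s}\pr u+\frac{1}{\s^2}\Delta_N u$, i.e.\ $-2\s'\Delta u$ by reading \eqref{Laplace-Beltrami} backwards. This backward reading of the Laplacian is the one step that requires attention: one must check that the coefficients $-2(n-1)$, $-2$ and $-2$ of the three surviving terms are exactly $-2\s'$ times the $(n-1)\s'/\s$, $1$ and $1/\s^2$ weights in \eqref{Laplace-Beltrami}. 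Granting this, $\s\,\pr(\Delta u)-\Delta(\s\,\pr u)=(n-2)\s''\pr u-2\s'\Delta u$, and \eqref{eq2} follows at once.
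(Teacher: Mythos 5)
Your proof is correct and takes essentially the same route as the paper: both arguments rest solely on the warped-product formula \eqref{Laplace-Beltrami}, the product rule in $r$, and the commutation $\pr(\Delta_N u)=\Delta_N(\pr u)$ together with pulling $\s(r)$ through $\Delta_N$ (your bridging relation $\frac{1}{\s}\pr(\Delta_N u)=\frac{1}{\s^2}\Delta_N(\s\,\pr u)$, which the paper uses implicitly by adding and subtracting $\frac{1}{\s^2}\Delta_N(\s\,\pr u)$). The only difference is organizational --- you expand $\s\,\pr(\Delta u)$ and $\Delta(\s\,\pr u)$ separately and subtract, factoring the surplus as $-2\s'\Delta u$, whereas the paper massages the expansion of $\s\,\pr(\Delta u)$ directly into $\Delta(\s\,\pr u)$ plus the same correction terms.
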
 

\noindent Formula \eqref{eq2} is a key ingredient in order to prove the Pohozaev-type identity.

\noindent Moreover, we observe that condition \eqref{cond_sovra_bis}, implies that the unit exterior normal to $\partial\Omega=\{u=0\}$ is given by
$$
\nu=-\dfrac{\nabla u}{|\nabla u|}|_{\partial\Omega}\, ;
$$
hence
\begin{equation}\label{der_norm}
\partial_\nu u=g(\nabla u,\nu)=-c \quad \text{on $\partial\Omega$.}
\end{equation}

Now we can prove Proposition \ref{Pohozaev}.

\begin{proof}[Proof of Proposition \ref{Pohozaev}]
	From Lemma \ref{lemma1} and since $\Delta u=-1-nku$ we get 
	\begin{equation}\label{Laplaciano}
	\Delta(\sigma\partial_r u)=-nk\sigma\partial_r u+(2-n)\sigma''\partial_r u-2\sigma'-2nk\sigma'u\, .
	\end{equation}
	From \eqref{Laplaciano} and, by using again that $\Delta u=-1-nku$, we have 
	\begin{equation}\label{America}
	(2-n)\int_{\Omega}\sigma''\partial_r u u -2\int_{\Omega}\sigma' u - 2nk\int_{\Omega}\sigma' u^2+\int_{\Omega}\sigma\partial_r u=\int_{\Omega}\left(\Delta(\sigma\partial_r u) u-\sigma\partial_r u\Delta u\right)\, .
	\end{equation}
	By using Green's theorem we have
	\begin{equation*}
	\int_{\Omega}\left(\Delta(\sigma\partial_r u) u-\sigma\partial_r u\Delta u\right) 
	=\int_{\partial\Omega}\left(\partial_{\nu}(\sigma\partial_r u) u-\sigma\partial_r u\partial_{\nu} u\right) \, ,
	\end{equation*}
	from the boundary condition in \eqref{pb_principale_bis} and from \eqref{der_norm} we get
	\begin{equation*}
	\int_{\Omega}\left(\Delta(\sigma\partial_r u) u-\sigma\partial_r u\Delta u\right) 
	=-c^2\int_{\partial\Omega}\s\partial_{\nu}r  \, ,
	\end{equation*}
	from the divergence theorem we obtain
	\begin{equation}\label{10}
	\int_{\Omega}\left(\Delta(\sigma\partial_r u) u-\sigma\partial_r u\Delta u\right) 
	=-c^2n\int_{\Omega}\s'\, ,
	\end{equation}
	where we used the fact that $\Delta r=(n-1)\frac{\s'}{\s}$ (see \eqref{Laplace-Beltrami}).
	
	So from \eqref{10} and \eqref{America} we get
	\begin{equation}\label{America_bis}
	(2-n)\int_{\Omega}\sigma''\partial_r u u -2\int_{\Omega}\sigma' u - 2nk\int_{\Omega}\sigma' u^2+\int_{\Omega}\sigma\partial_r u=-c^2n\int_{\Omega}\s'\, ,
	\end{equation}
	Finally, we observe that from the divergence theorem and from the boundary condition in \eqref{pb_principale_bis} we have
	\begin{equation}\label{eq6}
	\int_{\Omega} \s \pr u = \int_{\Omega} g(\nabla u, \nabla ( \int_{0}^{r}\s(s)ds))
	= - \int_{\Omega} u \Delta (\int_{0}^{r}\s(s)ds)
	= -n \int_{\Omega} u \s'.
	\end{equation}
	Moreover 
	\begin{equation}\label{eq7}
	\int_{\Omega} \s'' \, u \pr u = \int_{\Omega}g(\nabla \s', \nabla (\frac{1}{2}u^{2}))
	= -  \frac{1}{2} \int_{\Omega} \Delta \sigma' \, u^{2}
	= - \frac{1}{2} \int_{\Omega} \frac{(\s'' \s^{n-1})'}{\s^{n-1}} u^{2}\, ,
	\end{equation}
	where we used the divergence theorem, the boundary condition in \eqref{pb_principale_bis} and the formula \eqref{Laplace-Beltrami}. By substituting \eqref{eq6} and \eqref{eq7} in \eqref{America_bis} we obtain \eqref{pre2}.
	
\end{proof}

\section{Proof of Theorems \ref{teo_princ}, \ref{teo_princ_bis} and \ref{teo_princ_ter}}\label{Principal_section}

We are now in position to prove the principal results of the paper.

\begin{proof}[Proof of Theorem \ref{teo_princ}]
%
From Lemma \ref{pre1} we have that $\Delta P(u)\geq 0$. Since $P(u)=c^2$ on $\partial\Omega$, then by the strong maximum principle either 
\begin{equation}\label{uguale}
P(u)\equiv c^2 \quad \text{in $\Omega$,} 
\end{equation}
or 
\begin{equation}\label{minore}
P(u)< c^2 \quad \text{in $\Omega$.} 
\end{equation}
By contradiction assume that condition \eqref{minore} is satisfied. 
Since \eqref{constants} is in force, we can multiply both members of \eqref{minore} by $\s'$ and integrating over $\Omega$ we have 
\begin{equation}\label{15}
\int_{\Omega}|\nabla u|^2\sigma'+\dfrac{2}{n}\int_{\Omega}u\sigma'+k\int_{\Omega}u^2\sigma'<c^2\int_{\Omega}\sigma'\, .
\end{equation}
From \eqref{pre2} we can substitute the second term, i.e.
\begin{equation}\label{16}
\dfrac{2}{n}\int_{\Omega}\sigma' u=-\int_{\Omega}\sigma' u+c^2\int_{\Omega}\sigma'+\dfrac{n-2}{2n}\int_{\Omega}\dfrac{(\sigma''\sigma^{n-1})'}{\sigma^{n-1}}u^2-2k\int_{\Omega}\sigma' u^2\, .
\end{equation}
Note that, from the divergence theorem and from the boundary condition in \eqref{pb_principale},
\begin{equation}\label{Queen}
\int_{\Omega}\s'\dive(u\nabla u)=-\int_{\Omega}\s''u\partial_r u\, ,
\end{equation}
and, since $\Delta u=-1-nku$ in $\Omega$, 
\begin{equation}\label{Queen_bis}
\int_{\Omega}\s'\dive(u\nabla u)=\int_{\Omega}\s'|\nabla u|^2-\int_{\Omega}\s' u - nk\int_{\Omega}\s' u^2\,.
\end{equation}
So from \eqref{Queen} and \eqref{Queen_bis} we get
\begin{equation}\label{17}
\int_{\Omega}\s'|\nabla u|^2=\int_{\Omega}\s' u + nk\int_{\Omega}\s'  u^2 -\int_{\Omega}\s''u\partial_r u\, ,
\end{equation}
Substituting \eqref{16} and \eqref{17} in  \eqref{15}, we obtain
\begin{equation*}
k(n-1)\int_{\Omega}\sigma' u^2-\int_{\Omega}\sigma'' u \partial_r u+\dfrac{n-2}{2n}\int_{\Omega}\dfrac{(\sigma''\sigma^{n-1})'}{\sigma^{n-1}}u^2<0\, .
\end{equation*}
Lastly, by using \eqref{eq7} we deduce
\begin{equation*}
k(n-1)\int_{\Omega}\sigma' u^2+\dfrac{n-1}{n}\int_{\Omega}\dfrac{(\sigma''\sigma^{n-1})'}{\sigma^{n-1}}u^2<0\, ,
\end{equation*}
i. e.
\begin{equation}\label{contrad}
\int_{\Omega}\left(k\sigma'+\dfrac{(\sigma''\sigma^{n-1})'}{n\sigma^{n-1}}\right)u^2<0\, .
\end{equation}

On the other hand we have that
\begin{equation*}
k\sigma'+\dfrac{(\sigma''\sigma^{n-1})'}{n\sigma^{n-1}}  = 0 \quad on \quad I,
\end{equation*}
since \eqref{warped_function} is in force. 
The latter clearly contradicts \eqref{contrad} and therefore \eqref{uguale} holds true and in particular $\Delta P(u)\equiv 0$ in $\Omega$. Hence from Lemma \ref{pre1} we get that \eqref{harm1} and \eqref{harm2} hold true. Condition \eqref{harm1} implies that $u$ is a solution to \eqref{Obata}, so from Lemma \ref{Obata_lemma} we conclude that $\Omega=B_g(p,\rho)$ and $u$ is a radial function with respect to $p$, where $p$ is the maximum of $u$ in $\Omega$. 


\end{proof}

\begin{proof}[Proof of Theorem \ref{teo_princ_bis}]
Argiung as before we obtain that
\begin{equation*}
\int_{\Omega}\left(k\sigma'+\dfrac{(\sigma''\sigma^{n-1})'}{n\sigma^{n-1}}\right)u^2<0\, ,
\end{equation*}
and this contradicts the compatibility assumption \eqref{comp}.

Therefore \eqref{uguale} holds true and in particular $\Delta P(u)\equiv 0$ in $\Omega$, hence from Lemma \ref{pre1} we get that \eqref{harm1} and \eqref{harm2} hold true. Condition \eqref{harm1} implies that $u$ is a solution to \eqref{Obata}, so from Lemma \ref{Obata_lemma} we conclude that $\Omega=B_g(p,\rho)$ and $u$ is a radial function with respect to $p$, where $p$ is the maximum of $u$ in $\Omega$. 

%
\end{proof}

\begin{proof}[Proof of Theorem \ref{teo_princ_ter}] We emphasize that the key ingredients in the proof of Theorem \ref{teo_princ_bis} are: the formula \eqref{eq2} and the Pohozaev-type inequality \eqref{pre2}. The analogue of this formulae holds also in the setting of model manifolds (i.e. warped product manifolds with a pole), indeed the analogue of formula \eqref{eq2} is shown in \cite[Lemma 6]{Ron} and the analogue of formula \eqref{pre2} can be obtained arguing as in \cite[Lemma 8]{Ron}. Therefore arguing as in  Theorem \ref{teo_princ_bis} we get that $\Omega$ is a metric ball, say $B_g(p,\rho)$ for some $\rho>0$ and $p\in M$ and $u$ is a radial function with respect to $p$. Moreover, we know that $u$ satisfies the following equation
	\begin{equation}\label{Hessian_model}
	\nabla^2 u=- \left(\dfrac{1}{n}+ku\right)g \quad \text{in $\Omega$}\, .
	\end{equation}
	To prove the Theorem we argue as in the proof of Theorem 3 in \cite{Ron}. Let $\hat\rho:=d_g(o,\partial B_g(p,\rho))$, where $d_g$ is the intrinsic distance in the model manifold. Take $B_g(o,\hat{\rho})\subset B_g(p,\rho)$. To conclude the proof it is enough to show that 
	\begin{equation}\label{=_balls}
	B_g(o,\hat{\rho})=B_g(p,\rho)\, .
	\end{equation}
	To prove \eqref{=_balls} we consider $u|_{B_g(o,\hat\rho)}:B_g(o,\hat\rho)\rightarrow\mathbb{R}$ which solves \eqref{Hessian_model} in $B_g(o,\hat\rho)$. Take $x\in B_g(o,\hat\rho)$ then, by the results in Appendix \ref{Appendix} , there exists a minimizing and unit speed geodesic $\gamma\subset B_g(o,\hat\rho)$ from $o$ to $x$. Let $y(t):=u\circ \gamma(t)$ and we note that, along $\gamma$, equation \eqref{Hessian_model} implies: 
	\begin{align*}
	y''(t)&=\dfrac{d^2}{dt^2}(u\circ\gamma)(t) \\
	&=\dfrac{d}{dt}g(\nabla u(\gamma(t)),\dot{\gamma}(t)) \\
	&=g(D_{\dot{\gamma}}\nabla u(\gamma(t)),\dot{\gamma}(t))+g(\nabla u(\gamma(t)),D_{\dot{\gamma}}\dot{\gamma}(t))\\
	&=g((D_{\dot{\gamma}(t)}\nabla u)(\gamma(t)),\dot{\gamma}(t))\\
	&=\nabla^2(u)\mid_{\gamma(t)}(\dot{\gamma}(t),\dot{\gamma}(t))\\
	&=-\frac{1}{n}-ky(t)\, .
	\end{align*}
	The solutions to 
	$$
	y''(t)=-\dfrac{1}{n}-ky(t)
	$$
	are given by 
	\begin{equation}
	y(t)=\begin{cases}
	\alpha\sin(\sqrt{k}t)+\beta\cos(\sqrt{k}t)-\frac{1}{kn} &\mbox{if } k>0 , \\ -\frac{t^2}{2n}+\alpha t+\beta  &\mbox{if } k=0 ,\\
	\alpha e^{\sqrt{-k}t}+\beta e^{-\sqrt{-k}t}-\frac{1}{kn} &\mbox{if } k<0
	\, .
	\end{cases}
	\end{equation}
	where $\alpha$, $\beta\in\mathbb{R}$. Now taking $t=r(x)=d_g(x,o)$, the intrinsic distance from the pole $o$, we get 
	\begin{equation}\label{sol_model}
	u(x)=u\circ \gamma(r(x))=y(r(x))=\begin{cases}
	\alpha\sin(\sqrt{k}r(x))+\beta\cos(\sqrt{k}r(x))-\frac{1}{kn} &\mbox{if } k>0 , \\ -\frac{r(x)^2}{2n}+\alpha r(x)+\beta  &\mbox{if } k=0 , \\
	\alpha e^{\sqrt{-k}r(x)}+\beta e^{-\sqrt{-k}r(x)}-\frac{1}{kn} &\mbox{if } k<0
	\, ,
	\end{cases}
	\end{equation}
	which is a radial function with respect to $o$, hence $p=o$ and from the fact that $u>0$ in $B_g(p,\rho)$ and from the boundary condition $u=0$ on $\partial B_g(p,\rho)$ we get that $\hat\rho=\rho$. Hence we have shown \eqref{=_balls}.
	
	We observe that, in a system of normal coordinates in the pole $o$ of the model we have that $r(x)=|x|$, hence 
	$$
	\nabla u(x)=\begin{cases}
	\alpha\sqrt{k}\cos(\sqrt{k}|x|)\frac{x}{|x|}-\beta\sqrt{k}\sin(\sqrt{k}|x|)\frac{x}{|x|} &\mbox{if } k>0 , \\ -\frac{x}{n}+\alpha \frac{x}{|x|}  &\mbox{if } k=0 , \\
	\alpha \sqrt{-k}e^{\sqrt{-k}|x|}\frac{x}{|x|}-\beta\sqrt{-k} e^{-\sqrt{-k}|x|}\frac{x}{|x|} &\mbox{if } k<0
	\, ,
	\end{cases}
	$$
	which is not a $C^1$ function in the pole (i.e. $|x|=0$) unless 
	\begin{equation}\label{value_alpha}
	\alpha=0 \quad \text{ if $k\geq 0$} \quad \text{and} \quad \alpha=\beta \quad \text{ if $k< 0$}\, .
	\end{equation}
	Moreover, from the condition $u(\rho)=0$ we get 
	\begin{equation}\label{value_beta}
	\beta=\begin{cases}
	\frac{1}{kn\cos(\sqrt{k}\rho)} &\mbox{if } k>0 , \\ \frac{\rho^2}{2n}  &\mbox{if } k=0 ,\\
	\frac{1}{2kn\cosh(\sqrt{-k}\rho)} &\mbox{if } k<0
	\, .
	\end{cases}
	\end{equation}
	Summing up, putting \eqref{value_alpha} and \eqref{value_beta} in \eqref{sol_model}, we get 
	\begin{equation}\label{sol_radial}
	u(r)=\begin{cases}
	\frac{\cos(\sqrt{k}r)}{kn\cos(\sqrt{k}\rho)}-\frac{1}{kn} &\mbox{if } k>0 , \\ \frac{\rho^2}{2n}-\frac{r^2}{2n} &\mbox{if } k=0 , \\
	\frac{\cosh(\sqrt{-k}r)}{kn\cosh(\sqrt{-k}\rho)}-\frac{1}{kn} &\mbox{if } k<0\, .
	\end{cases}
	\end{equation}
	Now, we recall that if $f:(M,g)\rightarrow\mathbb{R}$ is a smooth radial function on a model manifold, then its Hessian takes the following expression
	$$
	\nabla^2 f=f''dr\otimes dr+f'\s\s'g_{\mathbb{S}^{n-1}}.
	$$
	Using this expression with the function $u$ given by \eqref{sol_radial} we get
	$$
	\nabla^2 u(r)=\begin{cases}
	-\frac{\cos(\sqrt{k}r)}{n\cos(\sqrt{k}\rho)}dr\otimes dr-\frac{\sqrt{k}\sin(\sqrt{k}r)}{kn\cos(\sqrt{k}\rho)}\sigma(r)\sigma'(r)g_{\mathbb{S}^{n-1}} &\mbox{if } k>0 , \\ -\frac{1}{n}dr\otimes dr - \frac{r}{n}\sigma(r)\sigma'(r)g_{\mathbb{S}^{n-1}} &\mbox{if } k=0 , \\
	-\frac{\cosh(\sqrt{-k}r)}{n\cosh(\sqrt{-k}\rho)}dr\otimes dr+\frac{\sqrt{-k}\sinh(\sqrt{-k}r)}{kn\cosh(\sqrt{-k}\rho)}\sigma(r)\sigma'(r)g_{\mathbb{S}^{n-1}} &\mbox{if } k<0\, ;
	\end{cases}
	$$
	and using \eqref{Hessian_model} we can prove \eqref{metric}. Indeed, for $r\in(0,\rho)$,
	
	\medskip 
	
	\noindent	$\bullet$ 
	if $k>0$ 
	\begin{equation*}
	-\dfrac{\sqrt{k}\sin(\sqrt{k}r)}{kn\cos(\sqrt{k}\rho)}\sigma(r)\sigma'(r)g_{\mathbb{S}^{n-1}}= -\dfrac{\cos(\sqrt{k}r)}{n\cos(\sqrt{k}\rho)}\sigma^2(r)g_{\mathbb{S}^{n-1}}\, ,
	\end{equation*}
	which implies
	$$
	\frac{\sin(\sqrt{k}r)}{\sqrt{k}}\sigma(r)\sigma'(r)=\cos(\sqrt{k}r)\sigma^2(r)\, ,
	$$
	i.e. 
	\begin{equation}\label{sin}
	\sigma(r)=\dfrac{\sin(\sqrt{k}r)}{\sqrt{k}}\, .
	\end{equation}

	\smallskip
	
	\noindent	$\bullet$ 
	If $k=0$ 
	$$
	-\dfrac{1}{n}r\s(r)\s'(r)g_{\mathbb{S}^{n-1}}=-\dfrac{1}{n}\s^2(r)g_{\mathbb{S}^{n-1}}\, ,
	$$
	which implies that 
	$$
	r\s(r)\s'(r)=\s^2(r)\, ,
	$$
	i.e. 
	\begin{equation}\label{r}
	\sigma(r)=r\, .
	\end{equation}
	
	\smallskip
	
	\noindent	$\bullet$
	 If $k<0$
		\begin{equation*}
		\dfrac{\sqrt{-k}\sinh(\sqrt{-k}r)}{kn\cosh(\sqrt{-k}\rho)}\sigma(r)\sigma'(r)g_{\mathbb{S}^{n-1}}=-\dfrac{\cosh(\sqrt{-k}r)}{n\cos(\sqrt{-k}\rho)}\sigma^2(r)g_{\mathbb{S}^{n-1}}\, ,
		\end{equation*}
		which implies that 
		$$
		\dfrac{\sinh(\sqrt{-k}r)}{\sqrt{-k}}\sigma(r)\sigma'(r)=\cosh(\sqrt{-k}r)\sigma^2(r)\, ,
		$$
		i.e. 
		\begin{equation}\label{sinh}
		\sigma(r)=\dfrac{\sinh(\sqrt{-k}r)}{\sqrt{-k}}\, ;
		\end{equation}
		where the constants in \eqref{sin}, \eqref{r} and in \eqref{sinh} are chosen in such a way that $\sigma'(0)=1$ (see \cite[Definition 2]{Ron}). So \eqref{metric} follows.
\end{proof}

\begin{remark}[An alternative proof]
	{\rm
One can prove Theorem \ref{teo_princ_ter} arguing in the following way: from Lemma \ref{pre1} we know that 
\begin{equation}\label{1}
\nabla^2 u=-\left(\dfrac{1}{n}+ku\right)g  \quad \text{in $\Omega$,}
\end{equation}
and 
\begin{equation}\label{2}
\ric_M(\nabla u,\nabla u)=(n-1)k|\nabla u|^2  \quad \text{in $\Omega$,}
\end{equation}
hold true. As in the proof of the Theorem one can see that \eqref{1} implies that $u$ is given by \eqref{sol_radial} and that $\Omega=B_{g}(o,\rho)$. Now, being $(M,g)$ a model manifold and since $u$  is a radial function we get that
$$
\ric_M(\nabla u,\nabla u)=-(n-1)\dfrac{\sigma''(r)}{\sigma(r)}|\nabla u|^2\, .
$$
So from \eqref{2} we conclude that 
$$
-\dfrac{\sigma''(r)}{\sigma(r)}=k \quad \text{for $r\in(0,\rho)$}\, ,
$$
and this implies that $\sigma$ is given by \eqref{warped_function}. Now the hypothesis on $\sigma$ that makes the metric $g$ smooth in the pole are 
$$
\sigma^{(2k)}(0)=0 \quad \text{ for all $k=0,1,2,\dots$} \qquad \text{and} \qquad \sigma'(0)=1\, ,
$$
and hence \eqref{metric} follows.}
\end{remark}


\bigskip

\appendix

\section{About the star-shapedness property of balls}\label{Appendix}

Let $(M,g)$ be a Riemannian manifold without boundary (not necessarily complete). We will use the following notations: $\ell_g$ is the lenght of a $C^1$-piecewise curve in $(M,g)$, $d_g$ is the intrinsic distance in $(M,g)$ and we will denote by $B_g(p,R)$ the open ball centered at $p$ and of radius $R$. We recall that 
$$
\overline{B}_g(p,R)=\overline{B_g(p,R)} \quad \text{ for all $p\in M$ and $R>0$,}
$$
i.e., the closed ball centered at $p$ and of radius $R$ coincides with the closure of the open ball centered at $p$ and of radius $R$. 
The main results of this Appendix 
deal with the star-shapedness property of balls. The first result is 

\begin{lemma}\label{lemmaGeod1}
Let $\Omega \subset M$ be a domain such that $ \overline{\Omega}$ is compact and consider $B_g(p,R)\subseteq \Omega$. Then for all $q\in B_g(p,R)$ there exists a minimizing geodesic $\gamma:[0,1]\rightarrow B_g(p,R)$ such that $\gamma(0)=p$ and $\gamma(1)=q$.
\end{lemma}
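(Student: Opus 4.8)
The plan is to realize the desired geodesic as the limit of a length-minimizing sequence of curves, the entire point being to use the compactness of $\overline{B}_g(p,R)$ to compensate for the possible incompleteness of $(M,g)$. First I would record the one geometric fact that makes everything run: since $B_g(p,R)\subseteq\Omega$, taking closures gives $\overline{B}_g(p,R)=\overline{B_g(p,R)}\subseteq\overline{\Omega}$, and as a closed subset of the compact set $\overline{\Omega}$ the closed ball $\overline{B}_g(p,R)$ is itself compact.

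Next, fix $q\in B_g(p,R)$ and set $d:=d_g(p,q)<R$. By the very definition of the intrinsic distance there is a sequence of piecewise $C^1$ curves $\gamma_n\colon[0,1]\to M$ joining $p$ to $q$ with $\ell_g(\gamma_n)\to d$, and I may assume $\ell_g(\gamma_n)<R$ for every $n$. The crucial elementary observation is that each such curve is automatically confined to the ball: for every $t$ one has $d_g(p,\gamma_n(t))\le\ell_g(\gamma_n|_{[0,t]})\le\ell_g(\gamma_n)<R$, so $\gamma_n(t)\in B_g(p,R)$. Reparametrizing each $\gamma_n$ proportionally to arc length, the curves become uniformly Lipschitz (with constant $\ell_g(\gamma_n)<R$) and all take values in the compact set $\overline{B}_g(p,R)$.

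I would then invoke Arzel\`a--Ascoli to extract a subsequence converging uniformly to a Lipschitz curve $\gamma\colon[0,1]\to\overline{B}_g(p,R)$ with $\gamma(0)=p$ and $\gamma(1)=q$. By lower semicontinuity of the length functional under uniform convergence, $\ell_g(\gamma)\le\liminf_n\ell_g(\gamma_n)=d$, whereas $\ell_g(\gamma)\ge d_g(p,q)=d$ holds trivially; hence $\gamma$ minimizes length among curves from $p$ to $q$. Repeating the confinement estimate for the limit, $d_g(p,\gamma(t))\le\ell_g(\gamma)=d<R$, shows that the image of $\gamma$ in fact lies in the \emph{open} ball $B_g(p,R)$, as required.

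Finally, a length-minimizing curve, once reparametrized by arc length, is a smooth geodesic: this is the classical regularity statement for minimizers, obtained by passing to totally normal (geodesically convex) neighborhoods in which minimizers coincide with radial geodesics, and reparametrizing back to $[0,1]$ produces the sought minimizing geodesic inside $B_g(p,R)$. The main obstacle is exactly the lack of completeness: a priori a minimizing sequence could escape toward a ``missing'' part of $M$, and this is precisely what the estimate $d_g(p,\gamma_n(t))<R$ combined with the compactness of $\overline{B}_g(p,R)$ rules out; once this compactness is secured, the remaining steps are the standard calculus-of-variations arguments for existence and regularity of minimizers.
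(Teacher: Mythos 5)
Your proof is correct and follows essentially the same route as the paper's: a minimizing sequence of constant-speed curves confined to the ball, Arzel\`a--Ascoli via compactness of $\overline{B}_g(p,R)\subseteq\overline{\Omega}$, minimality of the Lipschitz limit, and the fact that Lipschitz length-minimizers are geodesics. The only differences are cosmetic: you invoke the classical totally-normal-neighborhood regularity argument where the paper cites the low-regularity result of S\"amann--Steinbauer (and handles the identification $\ell_d=\ell_g$ via Burtscher), and you make explicit the confinement estimate $d_g(p,\gamma(t))\le\ell_g(\gamma)<R$ showing the limit curve stays in the \emph{open} ball, a point the paper leaves implicit.
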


\begin{proof}
Let $q\in B_g(p,R)$ and consider a minimizing sequence $\{\gamma_k\}$ of constant speed piecewise regular curves $\gamma_k:[0,1]\rightarrow\overline{B}_g(p,R)$, such that $\gamma_k(0)=p$, $\gamma_k(0)=q$ and 
$$
\ell_g(\gamma_k)\rightarrow d_g(p,q)<R\, .
$$
Since $d_g(p,q)<R$ we may and do suppose that $\gamma_k([0,1])\subset \overline{B}_g(p,R)$, for every $k\geq 1$. Since $\{\ell_g(\gamma_k)\}$ is uniformly bounded we have that the constant speeds of $\gamma_k$ are bounded by a fixed constant $C>0$ and so
$$
d_g(\gamma_k(s),\gamma_k(t))\leq \ell_g(\gamma_k|_{[s,t]})\leq C(t-s)\, , \quad \text{ for every $k\geq 1$ and $s,t\in[0,1]$ with $s<t$.}
$$
The latter implies the the sequence $\{\gamma_k\}$ is a sequence of equilipschitz maps from $[0,1]$ to the compact metric space $(\overline{B}_g(p,R),d_g)$ and therefore, by Ascoli-Arzel\`a Theorem, we can find a subsequence which converges uniformly to a curve $\gamma:[0,1]\rightarrow \overline{B}_g(p,R)$ with $\gamma(0)=p$, $\gamma(1)=q$ and $\gamma$ is Lipschitz and minimizing. Summing up, we prove that $\gamma:[0,1]\rightarrow \overline{B}_g(p,R)$ is a Lipschitz curve which minimize the so-called induced length:
$$
\ell_d(\gamma):=\sup\left\{ \sum_{i=1}^n d_g(\gamma(t_{i-1}),\gamma(t_i)) \, : \, n\in\mathbb{N}\, , 0=t_0<t_1<\dots<t_n=1\right\}\, .
$$
Now, since $\gamma$ is absolutely continuous, from \cite[Proposition 3.7]{Burtscher} (see also \cite[Section 2.7]{BBY}) we know that 
$$
\ell_g(\gamma)=\int_{0}^1|\gamma'(t)|_g\, dt
$$
is well-defined\footnote{We recall that given $M$ a connected smooth manifold with a smooth Riemannian metric $g$ and given $\gamma:[0,1]\rightarrow M$ a piecewise smooth curve, then its length is defined as 
$$
\ell_g(\gamma):=\int_0^1|\gamma'(t)|_g\, dt \, .
$$
If $\gamma:[0,1]\rightarrow M$ is just a continuous path in $M$, then its length is defined as
$$
\ell_d(\gamma):=\sup\left\{ \sum_{i=1}^n d_g(\gamma(t_{i-1}),\gamma(t_i)) \, : \, n\in\mathbb{N}\, , 0=t_0<t_1<\dots<t_n=1\right\}\, ,
$$
where $d_g$ is the intrinsic distance in $M$. A well-known Theorem (see e.g. \cite[Theorem 2.2]{Burtscher}) asserts that: if $M$ is a connected manifolds with a Riemannian metric $g$. Then, for all piecewise smooth curve $\gamma$,
$$
\ell_g(\gamma)=\ell_d(\gamma)\, .
$$
The point is that $\ell_d$ is defined for a larger class of curves. One can show (see e.g. \cite[Proposition 3.7]{Burtscher}) that if $M$ is a connected manifold with a continuous Riemannian metric $g$, then for any absolutely continuous path $\gamma:[0,1]\rightarrow M$ the derivative $\gamma'$ exists a.e. and $|\gamma'|_g\in L^1$. In particular 
$$
\ell_d(\gamma)=\int_0^1|\gamma'(t)|_g\, dt\, ,
$$ 
is a well-defined length for any absolutely continuous path $\gamma$.}. 
Hence $\ell_d(\gamma)=\ell_g(\gamma)$ and $\gamma$ is a minimizer also for the functional $\ell_g$. Now the thesis follows from the following general fact: if the Riemannian metric $g$ is $C^1$, then any locally Lipschitz and minimizing curve is a geodesic (see e. g. \cite[Section 2]{SS}).

\end{proof}

Thanks to Lemma \ref{lemmaGeod1} we can prove the next Proposition, in which we show the star-shapedness property of balls.

\begin{proposition}\label{LemmaGeod2}
	Let $\Omega \subset M$ be a domain such that $ \overline{\Omega}$ is compact
	and consider $B_g(p,R)\subseteq \Omega$. Then for all $q\in \partial B_g(p,R)$ there exists a minimizing geodesic $\gamma:[0,1]\rightarrow \overline{B_g(p,R)}$ such that $\gamma(0)=p$, $\gamma(1)=q$ and $ \gamma(t) \in B_g(p,R)$ for any $t \in [0,1)$.
\end{proposition}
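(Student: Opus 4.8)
The plan is to obtain the desired geodesic as a limit of the interior minimizing geodesics produced by Lemma \ref{lemmaGeod1}. Fix $q\in\partial B_g(p,R)$. Since $\overline{B}_g(p,R)=\overline{B_g(p,R)}$ and the open ball is exactly $\{x:d_g(p,x)<R\}$, a boundary point satisfies $d_g(p,q)=R$, and moreover $q$ is a limit of interior points: I would choose $q_j\in B_g(p,R)$ with $q_j\to q$, so that $d_g(p,q_j)\to d_g(p,q)=R$ by continuity of the distance.

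For each $j$, Lemma \ref{lemmaGeod1} furnishes a constant-speed minimizing geodesic $\gamma_j:[0,1]\to B_g(p,R)$ with $\gamma_j(0)=p$ and $\gamma_j(1)=q_j$; its speed equals $\ell_g(\gamma_j)=d_g(p,q_j)\le R$. Hence the family $\{\gamma_j\}$ is equi-Lipschitz (with constant $R$) and takes values in the compact metric space $(\overline{B}_g(p,R),d_g)$. By the Ascoli--Arzel\`a theorem, after passing to a subsequence, $\gamma_j\to\gamma$ uniformly, where $\gamma:[0,1]\to\overline{B}_g(p,R)$ is Lipschitz, $\gamma(0)=p$ and $\gamma(1)=\lim_j q_j=q$.

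Next I would show $\gamma$ is minimizing. On the one hand, $\ell_d(\gamma)\ge d_g(\gamma(0),\gamma(1))=d_g(p,q)=R$. On the other hand, the induced length $\ell_d$ is lower semicontinuous under pointwise (hence uniform) convergence: for every partition $0=t_0<\dots<t_m=1$ one has $\sum_i d_g(\gamma(t_{i-1}),\gamma(t_i))=\lim_j\sum_i d_g(\gamma_j(t_{i-1}),\gamma_j(t_i))\le\liminf_j\ell_d(\gamma_j)$, so taking the supremum over partitions and using $\ell_d(\gamma_j)=\ell_g(\gamma_j)=d_g(p,q_j)\to R$ gives $\ell_d(\gamma)\le R$. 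Therefore $\ell_d(\gamma)=R=d_g(p,q)$, i.e.\ $\gamma$ is a minimizing curve. Exactly as in the proof of Lemma \ref{lemmaGeod1}, $\gamma$ is absolutely continuous with $\ell_g(\gamma)=\ell_d(\gamma)$ by \cite[Proposition 3.7]{Burtscher}, and since $g$ is $C^1$ every locally Lipschitz minimizing curve is a geodesic (\cite[Section 2]{SS}); thus $\gamma$ is a minimizing geodesic from $p$ to $q$, which I may take of constant speed $R$.

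Finally, the containment $\gamma(t)\in B_g(p,R)$ for $t\in[0,1)$ is forced by minimality: any sub-arc $\gamma|_{[0,t]}$ of a minimizing geodesic is itself minimizing, so $d_g(p,\gamma(t))=\ell_g(\gamma|_{[0,t]})=tR<R$ whenever $t<1$. This places $\gamma(t)$ in the open ball for $t\in[0,1)$, while $\gamma(1)=q\in\partial B_g(p,R)$, completing the argument. I expect the only delicate point to be the passage to the limit: verifying that the limit curve is still minimizing (the lower-semicontinuity step) and that the approximating curves remain in the compact set $\overline{B}_g(p,R)$ so that Ascoli--Arzel\`a applies. Building the $\gamma_j$ via Lemma \ref{lemmaGeod1} keeps them inside $B_g(p,R)$ automatically, which is what makes the compactness step clean and avoids any risk of the curves leaving $\Omega$.
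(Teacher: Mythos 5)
Your proposal is correct and follows essentially the same route as the paper: approximate $q$ by interior points $q_j$, apply Lemma \ref{lemmaGeod1} to get minimizing geodesics $\gamma_j$ from $p$ to $q_j$, extract a uniform limit via Ascoli--Arzel\`a in the compact space $(\overline{B}_g(p,R),d_g)$, and invoke \cite[Section 2]{SS} to conclude the Lipschitz minimizing limit is a geodesic. In fact your write-up is more complete than the paper's: you justify minimality of the limit curve explicitly (via lower semicontinuity of $\ell_d$ under uniform convergence, which is needed here since the endpoints $q_j$ vary, unlike in Lemma \ref{lemmaGeod1}), and you actually prove the clause $\gamma(t)\in B_g(p,R)$ for $t\in[0,1)$ by the standard sub-arc minimality argument $d_g(p,\gamma(t))=tR<R$ --- a part of the statement the paper's proof asserts but never verifies.
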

\begin{proof}
 Since $q\in \partial B_g(p,R)$  we can find a sequence $\{q_k\}$ of points of $B_g(p,R)$ such that $ q_k \to q$ in  $(\overline{B}_g(p,R),d_g)$.  By Lemma \ref{lemmaGeod1} there are minimizing geodesics $\gamma_n :[0,1]\rightarrow {B_g(p,R)}$ such that $\gamma(0)=p$, $\gamma_k(1)=q_k$, for any $ k \geq 1$. 
The same argument used in the second proof of Lemma \ref{lemmaGeod1} implies that the sequence $\{\gamma_k\}$ admits a subsequence which converges uniformly to a Lipschitz curve $\gamma:[0,1]\rightarrow \overline{B}_g(p,R)$ with $\gamma(0)=p$ and $\gamma(1)=q$ (since $ q_k \to q$ in $\overline{B_g(p,R)}$ implies $\gamma_k(1) =q_k  \to q $ while the convergence of $\{\gamma_k \} $ implies $ \gamma_k(1) \to \gamma(1)$.) 
Therefore $\gamma$ is a Lipschitz minimizing curve and so it is a geodesic by \cite[Section 2]{SS}. 
\end{proof}

\newpage


\begin{thebibliography}{20} 


\bibitem{Alex} A. D. Alexandrov. \emph{Uniqueness theorems for surfaces in the large II.} Vestnik Leningrad Univ. {\bf 12}, no. 7 (1957), 15-44 (in Russian); English transl.: Amer. Math. Soc. Transl. {\bf21}, no. 2 (1962), 354-388.


\bibitem{Besse} A.L. Besse. Einstein Manifolds. Reprint of the 1987 edition. Classics in Mathematics. Springer-Verlag, Berlin, 2008.

\bibitem{BN} R.L. Bishop, B. O'Neill. \emph{Manifolds of negative curvature.} Trans. Amer. Math. Soc. {\bf 145} (1969), 1-49.



\bibitem{Brendle} S. Brendle. \emph{Constant mean curvature surfaces in warped product manifolds.} Publ. Math. Inst. Hautes \'Etudes Sci. {\bf 117} (2013), 247-269. 

\bibitem{BBY} D. Burago, Y. Burago, S. Ivanov. A Course in Metric Geometry, vol. 33 of Graduate Studies in Mathematics. American Mathematical Society, Providence, RI, 2001.


\bibitem{Burtscher} A. Burtscher. \emph{Length structures on manifolds with continuous Riemannian metrics.} New York J. Math. {\bf 21} (2015), 273--296.

\bibitem{CGS} L. Caffarelli, N. Garofalo, F. Seg\`ala. \emph{A gradient bound for entire solutions of quasi-linear equations and its consequences.} Comm. Pure Appl. Math. 47 (1994), 1457–1473.


\bibitem{Libro_warped}  B.-Y. Chen. Differential geometry of warped product manifolds and submanifolds. World Scientific, Hackensack, NJ, 2017.

\bibitem{Ciraolo_Roncoroni} G. Ciraolo, A. Roncoroni. \emph{Serrin's type overdetermined problems in convex cones.} Preprint.

\bibitem{Ciraolo_Vezzoni} G. Ciraolo, L. Vezzoni. \emph{On Serrin's overdetermined problem in space forms.} Manuscripta Math. (2018). 


\bibitem{FMW} M.M. Fall, I. Minlend, T. Weth. \emph{Serrin's overdetermined problem on the sphere.} Calc. Var. Partial Differential Equations {\bf 57} (2018), no. 1, Art. 3, 24 pp. 


\bibitem{FK} A. Farina, B. Kawohl. \emph{Remarks on an overdetermined boundary value problem.} Calc. Var. Partial Differential Equations 31 (2008), 351–357.

\bibitem{FV} A. Farina, E. Valdinoci. \emph{A pointwise gradient estimate in possibly unbounded domains with nonnegative
	mean curvature.} Adv. Math., 225 (2010), no. 5, 2808–2827.

\bibitem{FGK} I. Fragal\`a, F. Gazzola, B. Kawohl. \emph{Overdetermined problems with possibly degenerate ellipticity, a geometric approach.} Math. Z. 254 (2006), 117–132.

\bibitem{GL} N. Garofalo, J.L. Lewis. \emph{A symmetry result related to some overdetermined boundary value problems.} Amer. J. Math. 111 (1989), 9–33.


\bibitem{GNN} B. Gidas, W. M. Ni and L. Nirenberg. \emph{Symmetry and related properties via the maximum principle.} Comm. Math. Phys. 68 (1979), no. 3, 209-243.




\bibitem{Hopf} H. Hopf. \emph{Zum Clifford-Kleinschen Raumproblem.} Math. Ann. {\bf 95} (1926), 313-339.

\bibitem{Kanai} M. Kanai. \emph{On a differential equation characterizing a Riemannian structure of a manifold.} Tokyo J. Math. {\bf 6} (1983), 143-151


\bibitem{Killing} W. Killing. \emph{Ueber die Clifford-Klein'schen Raumformen.} Math. Ann. {\bf 39} (1891), 257-278.

\bibitem{Kumaresan-Prajapat} S. Kumaresan, J. Prajapat. \emph{Serrin’s result for hyperbolic space and sphere.} Duke Math. J. {\bf 91} (1998), 17-28.


\bibitem{Molzon} R. Molzon. \emph{Symmetry and overdetermined boundary value problems.} Forum Math. {\bf 3} (1991), 143-156.

\bibitem{Obata} M. Obata. \emph{Certain conditions for a Riemannian manifold to be iosometric with a sphere.} J. Math. Soc. Japan {\bf 14} (1962), 333-340.

\bibitem{Payne} L.E. Payne. \emph{Some remarks on maximum principles.} J. Anal. Math., 30 (1976) 421-433.




\bibitem{Ron} A. Roncoroni. \emph{A Serrin-type symmetry result on model manifolds: an extension of the Weinberger argument.}
Comptes Rendus - Math\'ematique, 356 (2018), 648-656.



\bibitem{SS} C. S{\"a}mann, R. Steinbauer. \emph{On geodesics in low regularity.} J. Phys. Conf. Ser. {\bf 968} (2018), 012010, 14 pp. 


\bibitem{Serrin} J. Serrin. \emph{A symmetry problem in potential theory.} Arch. Rational Mech. Anal. {\bf 43} (1971), 304-318.

\bibitem{Shklover} V. Shklover. \emph{Schiffer problem and isoparametric hypersurfaces.} Rev. Mat. Iberoamericana {\bf 16} no. 3 (2000), 529--569.


\bibitem{Sperb} R. P. Sperb. Maximum Principles and Their Applications, Mathematics in Science and Engineering, vol.
157, Academic Press Inc. [Harcourt Brace Jovanovich Publishers], New York, 1981.

\bibitem{Tashiro} Y. Tashiro. \emph{Complete Riemannian manifolds and some vector fields.} Trans. Amer. Math. Soc. {\bf 117} (1965), 251-275.

\bibitem{Weinberger} H. Weinberger. \emph{Remark on the preceeding paper of Serrin.} Arch. Rational Mech. Anal. {\bf 43} (1971), 319-320.

\end{thebibliography}
\end{document}